\newenvironment{myabstract}{\par\noindent
{\bf Abstract . } \small }
{\par\vskip8pt minus3pt\rm}
\newcounter{item}[section]
\newcounter{kirshr}
\newcounter{kirsha}
\newcounter{kirshb}
\newenvironment{enumroman}{\setcounter{kirshr}{1}
\begin{list}{(\roman{kirshr})}{\usecounter{kirshr}} }{\end{list}}
\newenvironment{enumarab}{\setcounter{kirshb}{1}
\begin{list}{(\arabic{kirshb})}{\usecounter{kirshb}} }{\end{list}}
\newenvironment{athm}[1]{\vskip3mm\par\noindent
{\bf #1 }. \slshape }
{\upshape\par\vskip10pt minus3pt}
\newtheorem{theorem}{Theorem}[section]
\newtheorem{corollary}[theorem]{Corollary}
\newenvironment{demo}[1]{\noindent{\bf #1.}\upshape\mdseries}
{\nopagebreak{\hfill\rule{2mm}{2mm}\nopagebreak}\par\normalfont}
\theoremstyle{definition}
\newtheorem{definition}[theorem]{Definition}
\def\C{{\mathfrak{C}}}
\def\Fm{{\mathfrak{Fm}}}
\def\Nr{{\mathfrak{Nr}}}
\def\Fr{{\mathfrak{Fr}}}
\def\Sg{{\mathfrak{Sg}}}
\def\Fm{{\mathfrak{Fm}}}
\def\A{{\mathfrak{A}}}
\def\B{{\mathfrak{B}}}
\def\C{{\mathfrak{C}}}
\def\D{{\mathfrak{D}}}
\def\M{{\mathfrak{M}}}
\def\N{{\mathfrak{N}}}
\def\CA{{\bf CA}}
\def\Lf{{\bf Lf}}
\def\RCA{{\bf RCA}}
\def\Rd{{\ Rd}}
\def\(R)RA{{\bf (R)RA}}
\def\RA{{\bf RA}}
\def\QRA{{\sf QRA}}
\def\tr{{\sf tr}}
 \def\CA{{\sf CA}}
\def\B{{\sf B}}
\def\tp{{\sf tp}}
\def\Nr{{\mathfrak{Nr}}}
\def\Ra{{\mathfrak{Ra}}}
\def\Ra{{\mathfrak{Ra}}}
\def\Nr{{\mathfrak{Nr}}}
\def\A{{\mathfrak{A}}}
\def\B{{\mathfrak{B}}}
\def\C{{\mathfrak{C}}}
\def\D{{\mathfrak{D}}}
\def\A{{\mathfrak{A}}}
\def\B{{\mathfrak{B}}}
\def\C{{\mathfrak{C}}}
\def\D{{\mathfrak{D}}}
\def\Alg{{\mathfrak{Alg}}}
\def\L{{\mathfrak{L}}}
\def\L{{\mathfrak{L}}}
\def\CA{{\bf CA}}
\def\RA{{\bf RA}}
\def\RCA{{\bf RCA}}
\def\Diag{{\bf Diag}}
\def\Fl{{\mathfrak{Fl}}}
\def\CM{{\bf CM}}
\title{Quasi-projective relation algebras and directed cylindric algebras of any dimension are categorially 
equivalent}
\author{Tarek Sayed Ahmed}
\begin{document}
\maketitle

\begin{myabstract}{ We show that the class of relations with quasi projections $\QRA$  and Nemeti's directed cylindric algebras
$\CA^{\uparrow}$ are categorially equivalent.
There exists a functor from the former to the latter that is strongly invertible. We also prove that such algebras enjoy the superamamalgmation property.
Using pairing functions, stimulated by quasi-projection,  we formulate and prove a G\"odels second incompleteness theorem for
finite variable fragments, and we discuss Maddux's- like representations for $\QRA$, extended to $\CA^{\uparrow}$ by Sagi,
in connection to forcing in set theory.}
\footnote{Mathematics Subject Classification. 03G15; 06E25

Key words: multimodal logic, substitution algebras, interpolation}
\end{myabstract}

\section{Quasi-projective relation algebras}

The pairing technique due to Alfred Tarski, and substantially generalized by Istvan N\;emeti, consists of  defining 
a pair of quasi-projections.
$p_0$ and $p_1$ 
so that in a model $\cal M$ say of  a certain sentence $\pi$, where $\pi$ is built out of these quasi-projections, $p_0$ and $p_1$
are functions and for any element $a,b\in {\cal M}$, there is a $c$ 
such that $p_0$ and $p_1$
map $c$ to $a$ and $b,$ respectively. 
We can think of $c$ as representing the ordered pair $(a,b)$ 
and $p_0$ and $p_1$ are the functions that project the ordered pair onto 
its first and second coordinates.  

Such a technique, ever since introduced by Tarski, to formalize, and indeed succesfully so, set theory, in the calculas of relations 
manifested itself in several re-incarnations in the literature some of which are quite subtle and 
sophisticated.
One is Simon's proof of the representability of quasi-relation algebras $\QRA$ 
(relation algebrs with quasi projections) using a neat embedding theorem for cylindric algebras \cite{Andras}. 
The proof consists of stimulating a neat embeding theorem
via the quasi-projections, in short it is actually a 
{\it a completeness proof}. The idea implemented  is that quasi-projections, on the one hand, generate extra dimensions, and on the other it has 
control over such a stretching. The latter property does not come across very much in Simon's proof, but below we will give an exact rigorous 
meaning to such property. This method can is used by Simon to
apply a Henkin completeness construction. We shall use Simon's technique to further show that $\QRA$ has the superamalgamation property; 
this is utterly unsurprising because Henkin constructions also prove interpolation theorems. This is the case, e.g. 
for first order logics and several of its non-trivial extensions arising from the process of algebraising first order logic, 
by dropping the condition of local finiteness reflecting the fact
that formulas contain only finitely many (free) variables. A striking example in this connection is the algebras studied by Sain and Sayed Ahmed 
\cite{Sain}, \cite{Sayed}.

This last condition is  unwarrented from the algebraic point of view, because it presents an equational formalism of firs order logic.

The view, of capturing extra dimensions, using also quai-projections comes along also very much so, 
in N\'emetis directed cylindric algebras (introduced as a $\CA$ counterpart of 
$\QRA$). In those, S\'agi defined quasi-projections also to achieve a completeness theorem for higher order logics.
The technique used is similar to Maddux's proof of representation of ${\QRA}$s, which further emphasizes the correlation.
We start with making the notion of extra dimensions explicit. We formulate its dual notion, that of compressing dimensions, known as taking neat reducts.
The definition of neat reducts in the standard definition adopted by Henkin, Monk and Tarski in their mongraph, 
deals only with the latter case, but it proves useful to stretch the definition a little allowing 
arbitary substs of $\alpha$ not just initial segments.

\begin{definition} Let ${C}\in \CA_{\alpha}$ and $I\subseteq \alpha$, and let $\beta$ be the order type of $I$. Then
$$Nr_IC=\{x\in C: c_ix=x \textrm{ for all } i\in \alpha\sim I\}.$$
$$\Nr_{I}{\C}=(Nr_IC, +, \cdot ,-, 0,1, c_{\rho_i}, d_{\rho_i,\rho_j})_{i,j<\beta},$$
where $\beta$ is the unique order preserving one-to-one map from $\beta$ onto $I$, and all the operations 
are the restrictions of the corresponding operations on $C$. When $I=\{i_0,\ldots i_{k-1}\}$ 
we write $\Nr_{i_0,\ldots i_{k-1}}\C$. If $I$ is an initial segment of $\alpha$, $\beta$ say, we write $\Nr_{\beta}\C$.
\end{definition}
Similar to taking the $n$ neat reduct of a $\CA$, $\A$ in a higher dimension, is taking its $\Ra$ reduct, its relation algebra reduct.
This has unverse consisting of the $2$ dimensional elements of $\A$, and composition and converse are defined using one spare dimension.
A slight generalization, modulo a reshufflig of the indicies: 

\begin{definition}\label{RA} For $n\geq 3$, the relation algebra reduct of $\C\in \CA_n$ is the algebra
$$\Ra\C=(Nr_{n-2, n-1}C, +, \cdot,  1, ;, \breve{}, 1').$$ 
where $1'=d_{n-2,n-1}$, $\breve{x}=s_{n-1}^0s_{n-1}^{n-2}s_0^{n-1}x$ and $x;y=c_0(s_0^{n-1}x. s_0^{n-2}y)$. 
Here $s_i^j(x)=c_i(x\cdot d_{ij})$ when $i\neq q$ and $s_i^i(x)=x.$
\end{definition}
But what is not obvious at all is that an $\RA$ has a $\CA_n$ reduct for $n\geq 3$. 
But Simon showed that certain relations algebras do; namely the $\QRA$s.

\begin{definition} A relation algebra $\B$ is a $\QRA$ 
if there are elements $p,q$ in $\B$ satisfying the following equations:
\begin{enumarab}
\item $\breve{p};p\leq 1', q; q\leq 1;$
\item  $\breve{p};q=1.$
\end{enumarab}
\end{definition}
In this case we say that $\B$ is a $\QRA$  with quasi-projections $p$ and $q$. 
To construct cylindric algebras of higher dimensions 'sitting' in a $\QRA$, 
we need to define certain terms. seemingly rather complicated, their intuitive meaning 
is not so hard to grasp.
\begin{definition} Let $x\in\B\in \RA$, then , we nee$dom(x)=1';(x;\breve{x})$ and $ran(x)=1';(\breve{x}; x)$, $x^0=1'$, $x^{n+1}=x^n;x$. $x$ 

is a functional element if $x;\breve{x}\leq 1'$.
\end{definition}
Given a $\QRA$, which we denote by $\bold Q$, we have quasi-projections $p$ and $q$ as mentioned above. 
Next we define certain terms in ${\bf Q}$, cf. \cite{Andras}:


$$\epsilon^{n}=dom q^{n-1},$$
$$\pi_i^n=\epsilon^{n};q^i;p,  i<n-1, \pi_{n-1}^{(n)}=q^{n-1},$$
$$ \xi^{(n)}=\pi_i^{(n)}; \pi_i^{(n)},$$
$$ t_i^{(n)}=\prod_{i\neq j<n}\xi_j^{(n)}, t^{(n)}=\prod_{j<n}\xi_j^{(n)},$$
$$ c_i^{(n)}x=x;t_i^{(n)},$$
$$ d_{ij}^{(n)}=1;(\pi_i^{(n)}.\pi_j^{(n)}),$$
$$ 1^{(n)}=1;\epsilon^{(n)}.$$
and let
$$\B_n=(B_n, +, \cdot, -, 0,1^{(n)}, c_i^{(n)}, d_{ij}^{(n)})_{i,j<n},$$
where $B_n=\{x\in B: x=1;x; t^{(n)}\}.$
The intuitive meaning of those terms is explained in \cite{Andras}, right after their definition on p. 271.

\begin{theorem} Let $n>1$
\begin{enumerate} 
\item Then ${\B}_n$ is closed under the operations.
\item ${\B}_n$ is a $\CA_n$.
\end{enumerate}
\end{theorem}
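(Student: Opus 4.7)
The plan is to follow the classical Tarski--Givant/Simon strategy: establish that, inside the ambient $\QRA$ $\bold Q$, the terms $\pi_i^{(n)}$ behave as genuine functional elements encoding the $i$-th projection on the ``cube of valid $n$-tuple codes'' carved out by $t^{(n)}$, and then read off the cylindric axioms from the algebraic identities satisfied by these simulated projections.

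First I would verify a small package of basic identities in $\bold Q$: that $\epsilon^{(n)}\leq 1'$ is an ideal element, that each $\pi_i^{(n)}$ is a functional element (so $\breve{\pi_i^{(n)}};\pi_i^{(n)}\leq 1'$) whose domain is $\epsilon^{(n)}$, and that the family $\{\pi_i^{(n)}\}_{i<n}$ is ``jointly surjective onto $n$-tuples'' in the sense that $\breve{\pi_i^{(n)}};\pi_j^{(n)}=1$ for $i\neq j$. This is precisely where the $\QRA$ axioms are used; the identity $\breve{p};q=1$ is what forces the distinct projections to be independent. The central algebraic lemma is then that each $\xi_i^{(n)}=\pi_i^{(n)};\breve{\pi_i^{(n)}}$ is an equivalence element (symmetric under $\breve{}$ and idempotent under $;$), that these $\xi_i^{(n)}$ commute pairwise, and hence that $t^{(n)}$ acts as a two-sided identity on every element of $B_n$.

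Closure of $B_n$ under the operations is then largely formal. Boolean closure follows because $B_n=\{x:x=1;x;t^{(n)}\}$ is the image of the additive, monotone idempotent $x\mapsto 1;x;t^{(n)}$, and $1^{(n)}=1;\epsilon^{(n)}$ lies in $B_n$ and dominates every element there, so complementation relative to $1^{(n)}$ stays in $B_n$. Closure under $c_i^{(n)}$ follows from $t_i^{(n)};t^{(n)}=t^{(n)}$, which reduces to the idempotency of each $\xi_j^{(n)}$ and to $t^{(n)}=\xi_i^{(n)};t_i^{(n)}$. Closure under $d_{ij}^{(n)}$ is read off from the functionality of $\pi_i^{(n)}$ and $\pi_j^{(n)}$.

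Verifying the $\CA_n$ axioms on $\B_n$ then splits cleanly: the Boolean axioms are immediate; the cylindrification axioms ($c_i 0=0$, $x\leq c_i x$, $c_i(x\cdot c_i y)=c_i x\cdot c_i y$, $c_i c_j x=c_j c_i x$) reduce to $\RA$-identities involving the $t_i^{(n)}$; and the diagonal axioms follow from $\pi_i^{(n)}\cdot\pi_j^{(n)}$ being the ``graph of equality between the $i$-th and $j$-th coordinates''. The main obstacle, in my view, is not any single axiom but the sheer algebraic bookkeeping: one must combine the $\RA$-axioms (associativity, involution, the Peirce/De Morgan laws) with the pairing identities to push composites like $q^i;p$ across products. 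The most delicate single point is the commutativity $c_i^{(n)}c_j^{(n)}=c_j^{(n)}c_i^{(n)}$, which reduces to $t_i^{(n)};t_j^{(n)}=t_j^{(n)};t_i^{(n)}$; I would establish this first by showing that any two equivalence elements of the form $\pi;\breve{\pi}$ with $\pi$ functional commute under composition, and then deduce the remaining axioms.
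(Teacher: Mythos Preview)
Your proposal is essentially the argument in Simon's paper \cite{Andras}, which is exactly what the present paper cites for its proof; the strategy of establishing that the $\pi_i^{(n)}$ are functional with common domain $\epsilon^{(n)}$, that the $\xi_i^{(n)}$ are commuting equivalence elements, and then reading off closure and the $\CA_n$ axioms from $\RA$-calculus is the correct one and matches the source.

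One point needs tightening. Your final reduction of $c_i^{(n)}c_j^{(n)}=c_j^{(n)}c_i^{(n)}$ to the claim that ``any two equivalence elements of the form $\pi;\breve{\pi}$ with $\pi$ functional commute under composition'' is stated too broadly: in a full set relation algebra the kernel-equivalences $f;\breve{f}$ and $g;\breve{g}$ of two arbitrary partial functions need not commute (take $U=\{1,2,3\}$, $f$ identifying $1,2$ and $g$ identifying $2,3$). What makes $\xi_i^{(n)};\xi_j^{(n)}=\xi_j^{(n)};\xi_i^{(n)}$ hold in Simon's setting is the additional structure of the $\pi_i^{(n)}$: they share the common domain $\epsilon^{(n)}$ and, crucially, satisfy $\breve{\pi_i^{(n)}};\pi_j^{(n)}=1$ for $i\neq j$, which is where the $\QRA$ axiom $\breve{p};q=1$ enters. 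So your lemma should be stated for functional elements with the same domain whose ``cross-composites'' are $1$, not for arbitrary functional elements; once phrased that way the computation goes through, and the rest of your outline is sound.
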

\begin{proof} (1) is  proved in \cite{Andras} lemma 3.4 p.273-275 where the terms are definable in a $\QRA$. 
That it is a $\CA_n$ can be proved as \cite{Andras}  theorem 3.9.
\end{proof} 

\begin{definition} Consider the following terms.
$$suc (x)=1; (\breve{p}; x; \breve{q})$$
and
$$pred(x)=\breve{p}; ranx; q.$$ 
\end{definition}
It is proved in \cite{Andras} that $\B_n$ neatly embeds into $\B_{n+1}$ via $succ$. The successor function thus codes 
extra dimensions. The thing to observe here is that  we will see that $pred$; its inverse; 
guarantees a condition of commutativity of two operations: forming neat reducts and forming subalgebras;
it does not make a difference which operation we implement first, as long as we implement both one after the other.
So the function $succ$ {\it captures the extra dimensions added.}. From the point of view of {\it definability} it says 
that terms definable in extra dimensions add nothing, they are already term definable.
And this indeed is a definability condition, that will eventually lead to stong interpolation property we wnat.

\begin{theorem}\label{neat} Let $n\geq 3$. Then $succ: {\B}_n\to \{a\in {\B}_{n+1}: c_0a=a\}$ 
is an isomorphism into a generalized neat reduct of ${\B}_{n+1}$.
Strengthening the condition of surjectivity,  for all $X\subseteq \B_n$, $n\geq 3$, we have (*)
$$succ(\Sg^{\B_n}X)\cong \Nr_{1,2,\ldots, n}\Sg^{\B_{n+1}}succ(X).$$
\end{theorem}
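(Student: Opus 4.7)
The plan is to decompose the argument into three stages: first, showing that $succ$ is a homomorphism into $\B_{n+1}$ with $pred$ as a one-sided inverse; second, locating the image inside the generalized neat reduct $\Nr_{1,2,\ldots,n}\B_{n+1}$; and third, proving the ``commuting square'' identity (*). Unpacking (*), the inclusion $\subseteq$ is automatic once $succ$ is known to be a cylindric homomorphism landing inside the neat reduct, so the real content of the theorem is the reverse inclusion $\Nr_{1,2,\ldots,n}\Sg^{\B_{n+1}}succ(X)\subseteq succ(\Sg^{\B_n}X)$.

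For the first two stages I would work directly with the defining terms in the underlying $\QRA$. Preservation of the Boolean operations by $succ(x)=1;(\breve p; x; \breve q)$ is immediate, since $\breve p$ and $\breve q$ are functional and conjugation by functional elements distributes over $+$ and $-$. Preservation of the cylindric operations $c_i^{(n)}$ and $d_{ij}^{(n)}$ is a coordinate-shift calculation: under $succ$, the operations indexed by $i,j<n$ on $\B_n$ transport to operations indexed by $i+1,j+1\in\{1,\ldots,n\}$ on $\B_{n+1}$, and the verification comes down to bookkeeping with the terms $\pi_k^{(n+1)}$ and $t_k^{(n+1)}$, exploiting the quasi-projection axioms $\breve p; p\leq 1'$, $\breve q; q\leq 1'$ and $\breve p; q = 1$ in essentially the same way as in Simon's proof. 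The leading factor $1;$ in $succ(x)$ absorbs $c_0^{(n+1)}$, so the image lies inside $\{a\in \B_{n+1}: c_0^{(n+1)}a=a\}$, which by the definition of $\Nr$ given above is exactly the universe of $\Nr_{1,2,\ldots,n}\B_{n+1}$. Injectivity follows from a direct computation showing $pred(succ(x))=x$ for every $x\in\B_n$.

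The third stage is where the genuine work lies. My strategy is to use $pred$ as a genuine two-sided inverse, not on all of $\B_{n+1}$, but on $c_0^{(n+1)}$-fixed elements of $\Sg^{\B_{n+1}}succ(X)$. Concretely, I would establish two statements: (a) for every $a\in \Sg^{\B_{n+1}}succ(X)$ with $c_0^{(n+1)}a=a$, one has $succ(pred(a))=a$; and (b) $pred$ carries any such $a$ back into $\Sg^{\B_n}X$. Statement (a) is the true ``definability'' content of the theorem, formalising the slogan that the quasi-projections capture, rather than merely generate, the extra dimension; once it is in hand, the required inclusion follows immediately from (b). For (b) I would induct on a representation of $a$ as a $\CA_{n+1}$-polynomial in the generators $succ(X)$, the inductive step being a commutation lemma saying that $pred$ distributes over the Boolean operations on all of $\B_{n+1}$, and over the cylindric operations $c_i^{(n+1)}$, $d_{ij}^{(n+1)}$ with $i,j\in\{1,\ldots,n\}$ when restricted to elements fixed by $c_0^{(n+1)}$. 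It is precisely this commutation lemma with the cylindrifications, which hinges on the quasi-projection axiom $\breve p; q = 1$ together with properties of $\epsilon^{(n+1)}$, that I expect to be the technically delicate point, as it is where the $\QRA$ control over the added dimension has to be converted into a concrete algebraic identity.
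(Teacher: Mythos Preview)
Your proposal is correct and follows essentially the same route as the paper. The paper's own proof is extremely terse: it cites Simon's Theorem 5.1 for the fact that $succ$ respects the operations, and for (*) it simply invokes the two-sided inverse identities $succ(pred\,x)=x$ and $pred(succ\,x)=x$ for $c_0$-fixed $x$, referring to Simon's Lemmas 4.6--4.10 for the verification. Your three-stage decomposition, and in particular your identification of (a) and (b) as the content of the hard inclusion in (*), is exactly an unpacking of what those citations are doing; the ``commutation lemma'' you single out as the delicate point is precisely what Simon's lemmas 4.6--4.10 supply.
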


\begin{proof} The operations are respected by \cite{Andras} theorem 5.1. 
The last condition follows  because of the presence of the 
functional element $pred$, since we have $suc(pred x)=x$ and $pred(sucx)=x$, when $c_0x=x$, \cite{Andras} 
lemmas 4.6-4.10. 
\end{proof}
\begin{theorem}
Let $n\geq 3$. Let ${\C}_n$ be the algebra obtained from ${\B}_n$ by reshuffling the indices as follows; 
set $c_0^{{\C}_n}=c_n^{{\B}_n}$ and $c_n^{{\C}_n}=c_0^{{\cal B}_n}$. Then ${\C}_n$ is a cylindric algebra,
and $suc: {\C}_n\to \Nr_n{\C}_{n+1}$ is an isomorphism for all $n$. 
Furthermore, for all $X\subseteq \C_n$ we have
$$suc(\Sg^{\C_n}X)\cong \Nr_n\Sg^{\C_{n+1}}suc(X).$$ 
\end{theorem}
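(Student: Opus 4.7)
The plan is to deduce this theorem as a direct corollary of Theorem \ref{neat} via a suitable relabelling of cylindric indices, so the bulk of the work has already been done.

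The first step is to verify that $\C_n$ is itself a cylindric algebra. The cylindric-algebra axioms are preserved under any bijective relabelling of the dimension set, provided cylindrifications and diagonals are transported uniformly: replacing each $c_i$ by $c_{\sigma(i)}$ and each $d_{ij}$ by $d_{\sigma(i)\sigma(j)}$ for a permutation $\sigma$ of the indices yields again a $\CA_n$, isomorphic to the original via the identity map read through $\sigma$. The passage from $\B_n$ to $\C_n$ is precisely such a relabelling, namely the transposition of the indices $0$ and $n$, so $\C_n\in\CA_n$ and $\B_n\cong\C_n$ as cylindric algebras.

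Next I would transport the conclusion of Theorem \ref{neat} through the same transposition applied to $\B_{n+1}$. By that theorem $succ:\B_n\to\Nr_{1,2,\ldots,n}\B_{n+1}$ is an isomorphism of $\CA_n$'s, where the $\CA_n$ structure on the codomain uses the order-preserving identification $k\mapsto k+1$ of $\{0,1,\ldots,n-1\}$ with $\{1,2,\ldots,n\}$. Under the transposition of $0$ and $n$, the index set $\{1,2,\ldots,n\}$ of $\B_{n+1}$ corresponds to the initial segment $\{0,1,\ldots,n-1\}$ of $\C_{n+1}$, so the generalized neat reduct $\Nr_{1,2,\ldots,n}\B_{n+1}$ is carried onto the standard neat reduct $\Nr_n\C_{n+1}$. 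Composing this with the identification $\B_n\cong\C_n$ from the first step yields $succ:\C_n\to\Nr_n\C_{n+1}$ as a $\CA_n$-isomorphism.

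The final assertion $succ(\Sg^{\C_n}X)\cong \Nr_n\Sg^{\C_{n+1}}succ(X)$ follows by the same transport, as it is exactly condition $(*)$ of Theorem \ref{neat} rewritten in the relabelled coordinates; subalgebra generation depends only on the algebraic operations, which are unaffected by renaming the indices that label them. The only genuine obstacle in all of this is notational bookkeeping — keeping track of how the transposition acts simultaneously on cylindrifications, diagonals, the neat-reduct operator $\Nr_I$, and the order-preserving bijection that is implicit in the definition of $\Nr_I\C$ — but no new algebraic content is required beyond what Theorem \ref{neat} already supplies.
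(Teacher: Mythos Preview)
Your proposal is correct and follows the same route as the paper: the paper's proof is the single line ``immediate from \ref{neat}'', and what you have written is precisely an unpacking of why the statement is immediate from Theorem~\ref{neat} via the index transposition. Your explicit bookkeeping of how the relabelling carries $\Nr_{1,\ldots,n}\B_{n+1}$ to $\Nr_n\C_{n+1}$ and preserves the $\Sg$--$\Nr$ commutation is exactly the content hidden behind that one word.
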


\begin{proof} immediate from \ref{neat}
\end{proof}  
\begin{theorem} Let ${\C}_n$ be as above. Then $succ^{m}:{\C_n}\to \Nr_n\C_m$ is an isomophism, such that 
for all $X\subseteq A$, we have
$$suc^{m}(\Sg^{\C_n}X)=\Nr_n\Sg^{\C_m}suc^{n-1}(X).$$
\end{theorem}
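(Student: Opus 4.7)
The natural plan is to proceed by induction on $m - n \geq 1$, with the base case $m = n+1$ being precisely the content of the preceding theorem. For the inductive step, suppose $suc^{m-n} \colon \C_n \to \Nr_n \C_m$ is an isomorphism satisfying the asserted generation equality. Applying the preceding theorem at dimension $m$ yields a further isomorphism $suc \colon \C_m \to \Nr_m \C_{m+1}$, and the composite $suc \circ suc^{m-n}$ is a homomorphism $\C_n \to \C_{m+1}$ whose image I claim lies in $\Nr_n \C_{m+1}$. Indeed, for $y \in \Nr_n \C_m$, the element $suc(y)$ is fixed by $c_i$ for $i = n, \ldots, m-1$ (because $suc$ preserves these cylindrifications and $y$ already satisfies $c_i y = y$) and by $c_m$ (because $suc$ lands in $\Nr_m \C_{m+1}$). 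Injectivity and surjectivity onto $\Nr_n \C_{m+1}$ then follow from the two factor isomorphisms, invoking the identity $\Nr_n(\Nr_m \C_{m+1}) = \Nr_n \C_{m+1}$, which is immediate from the definition of neat reducts and the commutativity of cylindrifications.

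For the generation identity, the strategy is to unfold the induction one layer at a time. Given $X \subseteq \C_n$, the preceding theorem applied at dimension $n$ gives $suc(\Sg^{\C_n} X) = \Nr_n \Sg^{\C_{n+1}} suc(X)$. Applying it again at dimension $n+1$ to the set $suc(X) \subseteq \C_{n+1}$ gives $suc(\Sg^{\C_{n+1}} suc(X)) = \Nr_{n+1} \Sg^{\C_{n+2}} suc^2(X)$, and iterating this argument up to level $m$, combined with the fact that embeddings commute with subalgebra generation, yields the claimed equality for $suc^{m-n}$.

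The main technical obstacle lies in this second part. In general one only has the inclusion $\Sg^{\Nr_n B} Y \subseteq \Nr_n \Sg^{B} Y$, with the reverse direction requiring that the neat reduct be \emph{full} in an appropriate sense. This fullness is exactly what is supplied at each step by the term $pred$, which acts as a two-sided inverse of $suc$ on the set $\{a : c_0 a = a\}$; this was the content of the preceding theorem. Provided this fullness is carefully propagated through the induction and combined with the iterated-neat-reduct identity above, the generation equality at level $m-n$ follows.
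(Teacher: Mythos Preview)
Your proposal is correct and follows essentially the same approach as the paper, which proves the theorem in a single line: ``By induction on $n$.'' Your induction on $m-n$, iterating the one-step isomorphism of the preceding theorem and using the $pred$-witnessed fullness to push the generation equality through each layer, is precisely the argument that one-liner is gesturing at; you have simply supplied the details the paper omits.
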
 
\begin{proof} By induction on $n$.
\end{proof}
Now we want to neatly embed our $\QRA$ in $\omega$ extra dimensions. At the same we do not want to lose, our control over the streching;
we still need the commutativing of taking, now $\Ra$  reducts with forming subalgebras; we call this property the $\Ra S$ property.
To construct the big $\omega$ dimensional algebra, we use a standard ultraproduct construction.
So here we go.
For $n\geq 3$, let  ${\C}_n^+$ be an algebra obtained by adding $c_i$ and $d_{ij}$'s for $\omega>i,j\geq n$ arbitrarity and with 
$\Rd_n^+\C_{n^+}={\B}_n$. Let ${\C}=\prod_{n\geq 3} {\C}_n^+/G$, where $G$ is a non-principal ultrafilter
on $\omega$. 
In our next theorem, we show that the algebra $\A$ can be neatly embedded in a locally finite algebra $\omega$ dimensional algebra
and we retain our $\Ra S$ property. 

\begin{theorem} Let $$i: {\A}\to \Ra\C$$
be defined by
$$x\mapsto (x,  suc(x),\ldots suc^{n-1}(x),\dots n\geq 3, x\in B_n)/G.$$ 
Then $i$ is an embedding ,
and for any $X\subseteq A$, we have 
$$i(\Sg^{\A}X)=\Ra\Sg^{\C}i(X).$$
\end{theorem}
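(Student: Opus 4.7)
The plan is to verify the theorem in four steps, grouped into three paragraphs.

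First, well-definedness and the homomorphism property. For each $x \in A$, the coordinate $suc^{n-1}(x)$ is defined in $\C_n^+$ once $n$ is large enough that $x \in B_n$, and by the iterated version of Theorem \ref{neat} it belongs to $\Nr_n \C_n^+$. Since ``fixed by $c_i$ for every $i \geq 2$'' is a first-order condition, \L{}o\'s's theorem places $i(x)$ in $\Ra\C$. That $i$ respects the RA-operations follows because Boolean operations are componentwise in the ultraproduct, while $;$, converse, and $1'$ are given by CA-terms using one spare dimension (Definition \ref{RA}), and each $suc^{n-1}$ is a CA-isomorphism onto its image, so these derived terms are preserved coordinatewise. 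Injectivity is immediate: if $i(x)=i(y)$, then $suc^{n-1}(x)=suc^{n-1}(y)$ for $G$-many $n$, and each $suc^{n-1}$ is injective.

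Second, I turn to the identity $i(\Sg^\A X) = \Ra\Sg^\C i(X)$. The $\subseteq$ direction is immediate from $i$ being an RA-embedding that commutes with term formation. For $\supseteq$, take $a \in \Ra\Sg^\C i(X)$, write $a = (a_n)/G$, and apply \L{}o\'s to get $a_n \in \Sg^{\C_n^+}\{suc^{n-1}(x) : x \in X\}$ for $G$-many $n$; since $a \in \Ra\C$, we may further require $a_n \in \Nr_n\Sg^{\C_n^+}\{suc^{n-1}(x) : x \in X\}$ on the same set. Applying the strengthened surjectivity $(*)$ of Theorem \ref{neat}, transferred to $\C_n^+$, this set equals $suc^{n-1}(\Sg^{\C_n} X)$, so $a_n = suc^{n-1}(b_n)$ with $b_n \in \Sg^{\C_n} X \subseteq \Sg^\A X$; equivalently $b_n = pred^{n-1}(a_n)$ by functionality of $pred$.

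The main obstacle is the coherence of the family $(b_n)$. To conclude $i(b) = a$ for a single $b \in \Sg^\A X$, the pullbacks $pred^{n-1}(a_n)$ must stabilize to one element of $A$ on a set in $G$. This is where the role of $pred$ in Theorem \ref{neat} --- encoding the $\Ra S$ control over the stretching induced by $suc$ --- is essential: the uniform compression it provides forces the $pred^{n-1}(a_n)$ to agree across dimensions, identifying a single $b \in \Sg^\A X$ with $i(b) = a$. Without this dimension-uniform coherence, different coordinates of $a$ could compress to different elements of $A$, and $i$ would fail to surject onto $\Ra\Sg^\C i(X)$; the whole point of the strengthened neat-embedding theorem, lifted through the ultraproduct by \L{}o\'s, is precisely to supply it.
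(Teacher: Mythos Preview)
Your approach differs from the paper's. The paper gives a one-line contrapositive: if the identity $i(\Sg^{\A}X)=\Ra\Sg^{\C}i(X)$ failed, the failure would already be visible in some finite reduct $\C_n^+$, contradicting the finite-dimensional identities established in Theorems~1.7--1.9; details are deferred to \cite{Sayed}. You instead work directly, coordinatewise via \L{}o\'s, and attempt to pull an arbitrary $a\in\Ra\Sg^{\C}i(X)$ back through $pred^{n-1}$.

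Paragraphs one and two are fine. Paragraph three, however, names the coherence obstacle without resolving it. Saying that the strengthened neat-embedding theorem ``forces the $pred^{n-1}(a_n)$ to agree across dimensions'' is an assertion, not an argument: Theorem~\ref{neat} and its iterate give you, for each $n$ separately, a preimage $b_n\in\Sg^{\A}X$ with $suc^{n-1}(b_n)=a_n$, but nothing you have written links $b_n$ to $b_{n+1}$. The missing step is concrete. Since $a\in\Sg^{\C}i(X)$, there is a single $\CA_\omega$-term $\tau$, using only finitely many indices (say all $<N$), and finitely many $x_1,\dots,x_k\in X$ with $a=\tau(i(x_1),\dots,i(x_k))$. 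For $n\geq N$ on the relevant set in $G$ one has $a_n=\tau^{\C_n}(suc^{n-1}(\bar x))$ computed in the genuine $\CA_n$-structure. Because $suc:\C_n\to\Nr_n\C_{n+1}$ is a $\CA_n$-isomorphism and $\tau$ uses only indices $<n$, applying $suc$ to this equation gives $suc(a_n)=a_{n+1}$; hence $b_{n+1}=pred^{n}(a_{n+1})=pred^{n-1}(a_n)=b_n$, and the sequence stabilises. Until you make this explicit --- the finiteness of the generating term and the intertwining $suc(a_n)=a_{n+1}$ --- your third paragraph is a restatement of the problem rather than a proof. Once this is supplied, your direct route and the paper's finite-reduct descent are two sides of the same coin.
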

\begin{proof} The idea is that if this does not happen, then it will not happen in a fnite reduct, and this impossible \cite{Sayed}.

\end{proof}

\begin{theorem} Let $\bold Q\in {\RA}$. Then for all $n\geq 4$, there exists a unique 
$\A\in S\Nr_3\CA_n$ such that $\bold Q=\Ra\A$, 
such that for all $X\subseteq A$, $\Sg^{\bf Q}X=\Ra\Sg^{\A}X.$
\end{theorem}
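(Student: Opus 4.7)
The plan is to extract this finite-dimensional statement from the immediately preceding theorem. Fix $n\geq 4$. Starting from ${\bf Q}$, the construction already produces the chain $\C_3,\ldots,\C_n\in\CA_n$, and the earlier theorem gives that $suc^{n-3}:\C_3\to \Nr_3\C_n$ is an isomorphism. The natural candidate is therefore $\A:=\C_3$, which automatically lies in $\Nr_3\CA_n\subseteq S\Nr_3\CA_n$ up to this isomorphism.

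First I would verify ${\bf Q}=\Ra\A$ by unwinding the definitions: the terms $c_i^{(3)},d_{ij}^{(3)},1^{(3)}$ that produced $\C_3$ from ${\bf Q}$ are inverted by forming the relation-algebra reduct $\Ra\C_3=\Nr_{1,2}\C_3$ with composition and converse defined via the remaining dimension, and by design this returns the original operations $;,\breve{\;},1'$ of ${\bf Q}$. Next I would establish $\Sg^{{\bf Q}}X=\Ra\Sg^{\A}X$ for every $X\subseteq A$, bypassing the ultraproduct and working directly at the $n$-th coordinate: the earlier identity $suc(\Sg^{\B_n}X)\cong \Nr_{1,\ldots,n}\Sg^{\B_{n+1}}suc(X)$, iterated up to dimension $n$, says exactly that forming cylindric subalgebras commutes with taking neat reducts, and applying $\Ra$ (which only selects the $1,2$-neat reduct equipped with composition and converse) preserves this commutation.

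For uniqueness, suppose $\A'\in S\Nr_3\CA_n$ enjoys the same two properties. Substituting $X={\bf Q}$ in the $\Ra S$ identity forces $\A'=\Sg^{\A'}{\bf Q}$, so the cylindric structure on $\A'$ is entirely determined by the embedding ${\bf Q}\hookrightarrow\Nr_3\D$ for any witness $\D\in\CA_n$; any two such generated algebras must agree term-by-term with $\A$, yielding an isomorphism fixing ${\bf Q}$ pointwise.

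The step I expect to be most delicate is the $\Ra S$ identity: although it holds at the $\omega$-dimensional level via the ultrapower $\C$, descending faithfully to a fixed finite $n$ requires tracking which cylindrifications are actually used in generating a given element and verifying that no index beyond $n-1$ is ever needed. This is precisely where the control afforded by $pred$, the two-sided inverse of $suc$ on $\{x:c_0x=x\}$, becomes indispensable.
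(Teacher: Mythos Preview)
Your approach is sound but takes a genuinely different route from the paper. The paper's one-line proof derives the finite-$n$ statement \emph{from} the $\omega$-dimensional embedding of the previous theorem: once $i:\bold Q\hookrightarrow\Ra\C$ with the $\Ra S$ property is in hand for the ultraproduct $\C\in\CA_\omega$, the algebra $\A$ can be taken as (the isomorphic copy of) $\Sg^{\Nr_3\C}i(\bold Q)$; since $S\Nr_3\CA_\omega\subseteq S\Nr_3\CA_n$ for every $n\geq 4$, the existence and $\Ra S$ property at level $n$ come for free, and uniqueness follows from the $\Ra S$ identity itself.

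You instead bypass the ultraproduct entirely, taking $\A=\C_3$ and invoking the finite iteration $suc^{n-3}:\C_3\cong\Nr_3\C_n$ from the earlier theorems. This is more elementary---no non-principal ultrafilter, no Los argument---and makes explicit that the finite chain already carries all the information. The cost is exactly what you flag in your last paragraph: you must verify $\bold Q=\Ra\C_3$ directly (this is not stated in the present paper, though it is in Simon's), and you must check that the $\Nr$-level commutation of Theorem~1.9 transports to $\Ra$, which requires knowing that the composition and converse terms of Definition~\ref{RA} are preserved under $suc$. Both are true, but neither is a pure unwinding; the paper sidesteps them by passing through $\omega$ dimensions where $\Ra S$ is already packaged.

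Your uniqueness argument is slightly thin: saying ``agree term-by-term'' presupposes that the $\CA_3$-operations on $\A'$ are determined by the $\RA$-operations on $\bold Q$ together with membership in $S\Nr_3\CA_n$, which is precisely the content of the $\Ra S$ hypothesis applied with $X=Q$. Making that step explicit---$\A'=\Sg^{\A'}Q$ forces every element of $\A'$ to be a $\CA_3$-term in elements of $Q$, and such terms evaluate identically in any witness---would close the argument.
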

\begin{proof} This follows from the previous theorem together with $\Ra S$ property.
\end{proof}

\begin{corollary} Assume that $Q=\Ra\A\cong \Ra\B$ then this lifts to an isomorphism from $\A$ to $\B$.
\end{corollary}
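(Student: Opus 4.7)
\begin{demo}{Proof proposal}
The plan is to show that the given $\RA$-isomorphism $\phi:\Ra\A\to\Ra\B$ automatically respects every $\CA$-operation visible at each finite level through the quasi-projection terms, and then to glue these compatibilities into a global $\CA$-isomorphism $\bar\phi:\A\to\B$. The strategic use of the previous theorem is its uniqueness clause: both $\A$ and $\B$ are generated as $\CA$s by their $\Ra$-reducts (otherwise $\Sg^{\A}(\Ra\A)$ would already witness the existence part of that theorem, contradicting uniqueness), so any extension of $\phi$ that turns out to be a $\CA$-homomorphism is automatically defined everywhere and unique.

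Since $Q=\Ra\A=\Ra\B$ as $\QRA$s, the map $\phi$ fixes the distinguished quasi-projections $p$ and $q$. Every term $\epsilon^{(n)},\pi_i^{(n)},\xi_j^{(n)},t_i^{(n)},c_i^{(n)},d_{ij}^{(n)},1^{(n)}$ defining the $\CA_n$-structure of $\B_n$, and also the functions $suc$ and $pred$, are $\RA$-polynomials in $p,q$ alone. Hence, for each $n\geq 3$, $\phi$ restricts to a $\CA_n$-isomorphism between the copy of $\C_n$ computed inside $\A$ and the copy computed inside $\B$, and these restrictions commute with the neat embeddings $suc:\C_n\hookrightarrow\C_{n+1}$ on both sides.

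Next, assemble the finite-level isomorphisms through the ultraproduct construction of the previous theorems: one obtains an $\RA$-isomorphism between the two big $\omega$-dimensional algebras $\C$ (one built from $\A$, one from $\B$) whose restriction to $\Ra$-reducts equals $\phi$. The $\Ra S$ identity $i(\Sg^{\A}X)=\Ra\Sg^{\C}i(X)$, applied with $X=Q$, identifies $\A$ with the subalgebra $\Sg^{\C}i(Q)$ of the first big algebra, and symmetrically for $\B$. Restricting the big isomorphism to these two generated subalgebras yields the desired lift $\bar\phi:\A\to\B$; reversing the roles of $\A$ and $\B$ and applying the same construction to $\phi^{-1}$ supplies the inverse.

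The main obstacle is well-definedness. An equality $t(q_1,\ldots,q_k)=s(q_1',\ldots,q_m')$ in $\A$ between $\CA$-terms evaluated on elements of $Q$ need not be visible \emph{a priori} as an $\RA$-equation, yet the lift is forced to send both sides to equal elements of $\B$. This is exactly what the $\Ra S$ property precludes: every such equality descends into some $\C_n\subseteq \A$, where the $\CA$-operations unfold into $\RA$-terms in $p,q$ and the arguments, and $\phi$ transports the resulting $\RA$-equation faithfully into $\Ra\B$. Once this is in hand, both the homomorphism property and the bijectivity of $\bar\phi$ are automatic.
\end{demo}
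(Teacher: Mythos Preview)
The paper states this corollary with no proof, treating it as an immediate consequence of the uniqueness clause in the preceding theorem: once one knows that the $\omega$-dilation with the $\Ra S$ property is unique, isomorphic $\Ra$-reducts force isomorphic dilations. Your reconstruction via the explicit $\C_n$'s and the ultraproduct is more laborious than what the paper intends, but it is in the same spirit.

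There is, however, a genuine gap in your second paragraph. You write ``the map $\phi$ fixes the distinguished quasi-projections $p$ and $q$,'' but this is not justified. In the paper's definition a $\QRA$ is simply an $\RA$ in which \emph{some} pair of quasi-projections exists; the elements $p,q$ are not part of the signature, so an $\RA$-isomorphism $\phi:\Ra\A\to\Ra\B$ need not carry the particular $p_\A,q_\A$ used to build the $\C_n$'s inside $\A$ to the particular $p_\B,q_\B$ used inside $\B$. What you do get is that $\phi(p_\A),\phi(q_\A)$ are again quasi-projections in $\Ra\B$, and your construction then yields an isomorphism from $\A$ onto the dilation of $\Ra\B$ built from \emph{these} projections---call it $\B'$. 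To finish you must still argue $\B'\cong\B$, and that is precisely where the uniqueness clause of the previous theorem is needed: both $\B$ and $\B'$ are dilations of $\Ra\B$ with the $\Ra S$ property, hence isomorphic. Once you insert this step, your argument goes through; without it, the chain breaks at exactly the point where the paper's one-line appeal to uniqueness does the work.
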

The previous theorem says that $\Ra$ as a functor establishes an equivalence between ${\QRA}$ 
and a reflective subcategory of $\Lf_{\omega}$
We say that $\A$ is the $\omega$ dilation of ${\bf Q}$.
Now we are ready for:

\begin{theorem} $\QRA$ has $SUPAP$.
\end{theorem}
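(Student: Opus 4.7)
The plan is to transfer $SUPAP$ from $\Lf_\omega$, where it is classical and equivalent to Craig interpolation for first order logic, down to $\QRA$ along the functor $\Ra$ and the $\omega$-dilation construction established above.

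Given a span of $\QRA$-embeddings ${\bf Q}_1 \hookleftarrow {\bf Q}_0 \hookrightarrow {\bf Q}_2$, first I would pass to $\omega$-dilations $\A_0,\A_1,\A_2 \in \Lf_\omega$ with ${\bf Q}_i = \Ra\A_i$. By the $\Ra S$ property together with the uniqueness clause of the preceding theorems, one may take each $\A_i$ to be generated (as a $\CA_\omega$) by ${\bf Q}_i$, since $\Sg^{\A_i}{\bf Q}_i$ is an $\Ra S$-dilation of ${\bf Q}_i$. Each embedding $f_i : {\bf Q}_0 \hookrightarrow {\bf Q}_i$ then lifts canonically to a $\CA_\omega$-embedding $\tilde f_i : \A_0 \hookrightarrow \A_i$: the subalgebra $\Sg^{\A_i}f_i({\bf Q}_0)$ is an $\Ra S$-dilation of $f_i({\bf Q}_0) \cong {\bf Q}_0$, hence is canonically isomorphic to $\A_0$ by the isomorphism-lifting corollary just recorded.

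Next, I would invoke $SUPAP$ in $\Lf_\omega$ on the span $\A_1 \hookleftarrow \A_0 \hookrightarrow \A_2$, obtaining $\D \in \Lf_\omega$ with $\CA_\omega$-embeddings $g_i : \A_i \hookrightarrow \D$ satisfying $g_1\tilde f_1 = g_2\tilde f_2$ and the super interpolation clause: whenever $g_1(a_1) \le g_2(a_2)$ in $\D$, there is $a_0 \in \A_0$ with $a_1 \le \tilde f_1(a_0)$ and $\tilde f_2(a_0) \le a_2$. Taking $\Ra$-reducts, $\Ra g_i : {\bf Q}_i \hookrightarrow \Ra\D$ amalgamates the original span in $\QRA$.

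The remaining step, which I expect to be the main obstacle, is to descend the super interpolation clause itself to $\Ra\D$. Given $x \in {\bf Q}_1$, $y \in {\bf Q}_2$ with $\Ra g_1(x) \le \Ra g_2(y)$, the inequality persists in $\D$ and yields an interpolant $a_0 \in \A_0$ which a priori need not be two-dimensional. However, since $\A_0 \in \Lf_\omega$, $a_0$ is fixed by $c_i$ for all but finitely many $i$, so the set $F$ of indices outside the $\Ra$-reduct dimensions on which $a_0$ is still active is finite. Letting $a_0'$ be the result of successively applying $c_i$ for each $i \in F$ to $a_0$, one has $a_0' \in \Ra\A_0 = {\bf Q}_0$. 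Because $x$ and $y$ are themselves two-dimensional (so $c_i x = x$ and $c_i y = y$ for every $i \in F$), monotonicity and extensivity of cylindrifications together with the fact that the $\tilde f_i$ commute with cylindrification yield $x \le \tilde f_1(a_0')$ and $\tilde f_2(a_0') \le y$. Hence $a_0'$ is the required interpolant in ${\bf Q}_0$, completing the transfer.
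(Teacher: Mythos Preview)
Your proof is correct and follows essentially the same route as the paper: lift the span to $\omega$-dilations in $\Lf_\omega$ using the $\Ra S$/uniqueness machinery, invoke $SUPAP$ there, take $\Ra$-reducts, and then push the interpolant into the two-dimensional part by cylindrifying away its finitely many stray dimensions (using that the endpoints are already two-dimensional so the inequalities survive). The only point you leave implicit that the paper states is that $\Ra\D\in\QRA$: the quasi-projections of ${\bf Q}_1$ map into $\Ra\D$ and the $\QRA$ axioms are equational, so this is immediate.
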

\begin{proof}  We form the unique dilatons of the given algebras required to be superamalgamated. 
These are locally finite so we can find a superamalgam $\D$. Then $\Ra\D$ will be required superamalgam; it contains quasiprojections because the base algebras 
does.
Let $\A,\B\in \QRA$. Let $f:\C\to \A$ and $g:\C\to \B$ be injective homomorphisms .
Then there exist $\A^+, \B^+, \C^+\in \CA_{\alpha+\omega}$, $e_A:\A\to \Ra{\alpha}\A^+$ 
$e_B:\B\to  \Ra\B^+$ and $e_C:\C\to \Ra\C^+$.
We can assume, without loss,  that $\Sg^{\A^+}e_A(A)=\A^+$ and similarly for $\B^+$ and $\C^+$.
Let $f(C)^+=\Sg^{\A^+}e_A(f(C))$ and $g(C)^+=\Sg^{\B^+}e_B(g(C)).$
Since $\C$ has $UNEP$, there exist $\bar{f}:\C^+\to f(C)^+$ and $\bar{g}:\C^+\to g(C)^+$ such that 
$(e_A\upharpoonright f(C))\circ f=\bar{f}\circ e_C$ and $(e_B\upharpoonright g(C))\circ g=\bar{g}\circ e_C$. Both $\bar{f}$ and $\bar{g}$ are 
monomorphisms.
Now $Lf_{\omega}$ has $SUPAP$, hence there is a $\D^+$ in $K$ and $k:\A^+\to \D^+$ and $h:\B^+\to \D^+$ such that
$k\circ \bar{f}=h\circ \bar{g}$. $k$ and $h$ are also monomorphisms. Then $k\circ e_A:\A\to \Ra\D^+$ and
$h\circ e_B:\B\to \Ra\D^+$ are one to one and
$k\circ e_A \circ f=h\circ e_B\circ g$.
Let $\D=\Ra\D^+$. Then we obtained $\D\in \QRA$ 
and $m:\A\to \D$ $n:\B\to \D$
such that $m\circ f=n\circ g$.
Here $m=k\circ e_A$ and $n=h\circ e_B$. 
Denote $k$ by $m^+$ and $h$ by $n^+$.
Now suppose that $\C$ has $NS$. We further want to show that if $m(a) \leq n(b)$, 
for $a\in A$ and $b\in B$, then there exists $t \in C$ 
such that $ a \leq f(t)$ and $g(t) \leq b$.
So let $a$ and $b$ be as indicated. 
We have  $(m^+ \circ e_A)(a) \leq (n^+ \circ e_B)(b),$ so
$m^+ ( e_A(a)) \leq n^+ ( e_B(b)).$
Since $K$ has $SUPAP$, there exist $z \in C^+$ such that $e_A(a) \leq \bar{f}(z)$ and
$\bar{g}(z) \leq e_B(b)$.
Let $\Gamma = \Delta z \sim \alpha$ and $z' =
{\sf c}_{(\Gamma)}z$. (Note that $\Gamma$ is finite.) So, we obtain that 
$e_A({\sf c}_{(\Gamma)}a) \leq \bar{f}({\sf c}_{(\Gamma)}z)~~ \textrm{and} ~~ \bar{g}({\sf c}_{(\Gamma)}z) \leq
e_B({\sf c}_{(\Gamma)}b).$ It follows that $e_A(a) \leq \bar{f}(z')~~\textrm{and} ~~ \bar{g}(z') \leq e_B(b).$ Now by hypothesis
$$z' \in \Ra\C^+ = \Sg^{\Ra\C^+} (e_C(C)) = e_C(C).$$ 
So, there exists $t \in C$ with $ z' = e_C(t)$. Then we get
$e_A(a) \leq \bar{f}(e_C(t))$ and $\bar{g}(e_C(t)) \leq e_B(b).$ It follows that $e_A(a) \leq (e_A \circ f)(t)$ and 
$(e_B \circ g)(t) \leq
e_B(b).$ Hence, $ a \leq f(t)$ and $g(t) \leq b.$
We are done.
\end{proof} 

One can prove the theorem using the dimension restricted free algebra $B=\Fr_1^{\rho}\CA_{\omega}$, where $\rho(0)=2$.
This corresponds to a countable first order language with a sequence of variables of order type $\omega$ and one binary relation.
The idea is that $\Fr_1\QRA\cong  \Ra\Fr_1^{\rho}\CA_{\omega}$. So let 
$a, b\in \Fr_1\QRA$ be such that $a\leq b$. Then there exists $y\in \Sg^{\B}\{x\}$ were $x$ is the free generator
of both, such that
$a\leq y\leq b$.

But we need to show that pairing functions can be defined in $\Ra\Fr_{1}\CA_{\omega}$
We have one binary relation $E$ in our langauge; for convenience, 
we write $x\in y$ instead of $E(x,y)$, to remind ourselves that we are actually working in the language
of set theory. 
We define certain formulas culminating in formulating the axioms of a finitely undecidable theory, better known as Robinson's arithmetic 
in our language. These formulas are taken from N\'emeti \cite{Nemeti}. (This is not the only way to define quasi-projections)
We need to define, the quasi projections. Quoting Andr\'eka and N\'emeti in \cite{AN}, we do this by 'brute force'.

$$x=\{y\}=:y\in x\land (\forall z)(z\in x\implies z=y)$$
$$\{x\}\in y=:\exists z(z=\{x\}\land z\in y)$$
$$x=\{\{y\}\}=:\exists z(z=\{y\}\land x=\{z\})$$
$$x\in \cup y:=\exists z(x\in z\land z\in y)$$
$$pair(x)=:\exists y[\{y\}\in x\land (\forall z)(\{z\}\in x\to z=y)]\land \forall zy[(z
\in \cup x\land \{z\}\notin x\land$$
$$y\in \cup x\land \{y\}\notin x\to z=y]\land \forall z\in x\exists y
(y\in z).$$
Now we define the pairing functions:
$$p_0(x,y)=:pair(x)\land \{y\}\in x$$
$$p_1(x,y)=:pair(x)\land [x=\{\{y\}\}\lor (\{y\}\notin x\land y\in \cup x)].$$
$p_0(x,y)$ and $p_1(x,y)$ are defined.

\section{ Pairing functions in N\'emetis directed $\CA$s}

We recall the definition of what is called weakly higher order cylindric algebras, or directed cylindric algebras invented by N\'emeti 
and further studied by S\'agi and Simon. 
Weakly higher order cylindric algebras are natural expansions of cylindric algebras. 
They have extra operations that correspond to a certain kind of bounded existential 
quantification along a binary relation $R$. The relation $R$ is best thought of as the `element of relation' in a model of some set theory.
It is an abstraction of the membership relation. These cylindric-like algebras 
are the cylindric counterpart of quasi-projective relation algebras, introduced by Tarski. These algebras were 
studied by many authors
including Andr\'eka, Givant, N\'emeti, Maddux, S\'agi, Simon, and others. The reference \cite{Andras} is recommended for other references in the topic.
It also has reincarnations in Computer Science literature 
under the name of Fork algebras.
We start by recalling the concrete versions of directed cylindric algebras:

\begin{definition}(P--structures and extensional structures.) \\
Let $U$ be a set and let $R$ be a binary relation on $U$. The structure
$\langle U; R \rangle$ is defined to be a P--structure\footnote{``P'' stands for ``pairing'' or ``pairable''.} iff for every
elements $a,b \in U$ there exists an element $c \in U$ such that $R(d,c)$ is
equivalent with $d=a$ or $d=b$ (where $d \in U$ is arbitrary) , that is, \\
\\
\centerline{ $\langle U; R \rangle \models (\forall x,y)(\exists z)(\forall w)( R(w,z) \Leftrightarrow (w=x$ or $w=y))$.} \\
\\
The structure $\langle U; R \rangle $ is defined to be a \underline{weak P--structure} iff \\
\\
\centerline{ $ \langle U; R \rangle \models (\forall x,y)(\exists z)(R(x,z) $ and $ R(y,z))$.} \\
\\
The structure $\langle U; R \rangle$ is defined to be {extensional}
iff every two points $a,b \in U$ coincide whenever they have the same
``$R$--children'', that is, \\
\\
\centerline{ $\langle U; R \rangle \models (\forall x,y)(((\forall z) R(z,x) \Leftrightarrow R(z,y)) \Rightarrow x=y) $.}
\end{definition}

\noindent
We will see that if $\langle U; R \rangle$ is a P--structure then one can
``code'' pairs of elements of $U$ by a single element of $U$ and whenever
$\langle U; R \rangle$ is extensional then this coding is ``unique''. In fact,
in $\RCA_{3}^{\uparrow}$ (see the definition below) one can define terms similar
to quasi--projections and, as with the class of $\QRA$'s, one can equivalently
formalize many theories of first order logic as equational theories of certain
$\RCA_{3}^{\uparrow}$'s. Therefore $\RCA_{3}^{\uparrow}$ is in our main interest.
$\RCA_{\alpha}^{\uparrow}$ for bigger $\alpha$'s behave in the same way, an
explanation of this can be found in \cite{Sagi} and can be deduced from our proof, which shows that $\RCA_{3}^{\uparrow}$ has implicitly $\omega$ 
extra dimensions.
\begin{definition}
\label{canyildef}
(${\sf Cs}^{\uparrow}_{\alpha}$, $\RCA^{\uparrow}_{\alpha}$.) \\
Let $\alpha$ be an ordinal. Let $U$ be a set and let $R$ be a binary relation on $U$
such that $\langle U; R \rangle$ is a weak P--structure.
Then the
{full w--directed cylindric set algebra} of dimension $\alpha$ with base
structure $\langle U; R \rangle$ is the algebra: \\
\\
\centerline{$\langle {\cal P}({}^{\alpha}U); \cap, -, C_{i}^{\uparrow(R)}, C_{i}^{\downarrow(R)}, D_{i,j}^{U} \rangle_{i,j \in \alpha}$,} \\
\\
where $\cap$ and $-$ are set theoretical intersection and complementation (w.r.t. ${}^{\alpha}U$),
respectively, $D^{U}_{i,j} = \{ s \in {}^{\alpha}U: s_{i}=s_{j} \}$ and
$ C_{i}^{\uparrow(R)}, C_{i}^{\downarrow(R)}$ are defined as follows. For every
$X \in {\cal P}({}^{\alpha}U)$: \\
\\
\indent $ C_{i}^{\uparrow(R)}(X) = \{ s \in {}^{\alpha}U: (\exists z \in X)( R(z_{i},s_{i})$ and $(\forall j \in \alpha)(j \not=i \Rightarrow s_{j}=z_{j})) \},$ \\
\indent $ C_{i}^{\downarrow(R)}(X) = \{ s \in {}^{\alpha}U: (\exists z \in X)( R(s_{i},z_{i})$ and $(\forall j \in \alpha)(j \not=i \Rightarrow s_{j}=z_{j})) \}.$ \\
\\
The class of {w--directed cylindric set algebras} of dimension $\alpha$
and the class of {directed cylindric set algebras} of dimension $\alpha$
are defined as follows. \\
\\
\centerline{$ w-{\sf Cs}^{\uparrow}_{\alpha} ={\bf S} \{ {\cal A}: \ {\cal A}$ is a full
w--directed cylindric set algebra of dimension $\alpha$} \\
\centerline{ \indent \indent \indent \indent with base structure $\langle U; R \rangle$,
for some weak P--structure $\langle U; R \rangle \}$.} \\
\\
\centerline{$ {\sf Cs}^{\uparrow}_{\alpha} ={\bf S} \{ {\cal A}: \ {\cal A}$ is a full
w--directed cylindric set algebra of dimension $\alpha$} \\
\centerline{ \indent \indent \indent \indent with base structure $\langle U; R \rangle$,
for some extensional P--structure $\langle U; R \rangle \}$.} \\
\\
The class $\RCA^{\uparrow}_{\alpha}$ of {representable directed cylindric algebras} of
dimension $\alpha$ is defined to be $\RCA^{\uparrow}_{\alpha} = {\bf SP}{\sf Cs}^{\uparrow}_{\alpha}$.
\end{definition}

The main result of Sagi in \cite{Sagi} is a direct proof for the following: \\
\\
\begin{theorem}\label{rep}
{\em $\RCA^{\uparrow}_{\alpha}$ is a finitely axiomatizable
variety whenever $\alpha \geq 3$ and $\alpha$ is finite}
\end{theorem}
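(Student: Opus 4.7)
The plan is to prove finite axiomatizability directly, since an equationally axiomatized class is automatically a variety. So I aim to write down a finite set $\Sigma$ of equations valid in every w-directed cylindric set algebra and then show that every algebra satisfying $\Sigma$ embeds into a member of ${\sf Cs}^{\uparrow}_{\alpha}$. The guiding intuition is the parenthetical remark made just before the statement: $\RCA^{\uparrow}_3$ already carries $\omega$ implicit extra dimensions, because terms coding quasi-projections can be iterated in the same way they were for $\QRA$ in Section~1. This is what allows transferring the quasi-projection machinery of Section~1 from $\QRA$ to the directed $\CA$ setting.

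For $\Sigma$ I would include the Boolean axioms; the standard cylindric axioms reformulated for the pair $c_i^{\uparrow}, c_i^{\downarrow}$ (additivity, commutation across distinct coordinates, and the diagonal laws $d_{ij}$); finitely many equations asserting that $c_i^{\uparrow}$ and $c_i^{\downarrow}$ are Boolean conjugates; and one further equation postulating functional elements behaving as quasi-projections in the sense of the Andr\'eka--N\'emeti terms $pair(x), p_0(x,y), p_1(x,y)$ displayed at the close of Section~1. Soundness of $\Sigma$ in every ${\sf Cs}^{\uparrow}_{\alpha}$ is by direct inspection. For completeness, let $\A\models\Sigma$ with $3\leq \alpha<\omega$. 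Using the quasi-projections one defines successor-like terms that embed $\A$ into some $\A_{\alpha+1}\models\Sigma$ of dimension $\alpha+1$, exactly in the manner of Theorem~\ref{neat}. Iterating and forming an ultraproduct along a non-principal ultrafilter produces a locally finite $\omega$-dimensional $\A^+\in \Lf_\omega$, with $\A$ embedded in its $\alpha$-dimensional neat reduct and enjoying the $\Ra S$-style commutation $\Nr_\alpha\Sg^{\A^+}i(X)=i(\Sg^{\A}X)$. The classical Henkin representation of $\Lf_\omega$ then represents $\A^+$ as a cylindric set algebra on some base $U$; the binary relation $R$ on $U$ is read off directly from the quasi-projection terms, and restricting the representation to the $\alpha$-dimensional neat reduct yields the desired representation of $\A$ over a weak P-structure $\langle U; R\rangle$.

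The delicate step is the equational encoding of the weak P-structure condition $(\forall x,y)(\exists z)(R(x,z)\land R(y,z))$, which is a first-order sentence on the base rather than an equation on the algebra. The only viable route, suggested by the $\QRA$ analogy, is to replace it by a single equation in $\A$ postulating the existence of the quasi-projection elements: this is strong enough to force any Henkin-style representation to be built over a weak P-structure, yet weak enough to remain valid in every set algebra. Checking that this pairing equation is actually preserved by the dimension-stretching and by the ultraproduct, and verifying the $\Ra S$-property in the directed setting (where one has two cylindrifications interacting with $R$), is where the bulk of S\'agi's original argument in \cite{Sagi} is concentrated, and is the step I would expect to be the main obstacle.
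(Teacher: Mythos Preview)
The paper does not give its own proof of this theorem; it is quoted as ``the main result of S\'agi in \cite{Sagi}'' and attributed there as a \emph{direct} proof. From the later reference in Section~5 to ``the S\'agi representation'' alongside the Maddux-style representation of $\QRA$, the intended argument is a step-by-step construction of a concrete representation over an extensional P-structure, built directly from a finite equational axiom set (the axioms on p.~868 of \cite{Sagi}) without passing through $\Lf_\omega$. Your route, by contrast, is the Simon/neat-embedding strategy of Section~1 transplanted to the directed setting: stretch dimensions via quasi-projection terms, take an ultraproduct to land in a locally finite $\omega$-dimensional algebra, and then invoke the classical representability of $\Lf_\omega$.

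The gap in your outline is the final step. The Henkin representation of $\Lf_\omega$ yields an \emph{ordinary} cylindric set algebra over some base $U$; it knows nothing about directed cylindrifications. You still have to produce a binary relation $R$ on $U$ and verify that your representation sends $c_i^{\uparrow}$ and $c_i^{\downarrow}$ to the concrete operations $C_i^{\uparrow(R)}$, $C_i^{\downarrow(R)}$ of Definition~\ref{canyildef}, and that $\langle U;R\rangle$ is an extensional P-structure (not merely a weak one --- $\RCA^{\uparrow}_\alpha$ is defined via ${\sf Cs}^{\uparrow}_\alpha$, not $w\text{-}{\sf Cs}^{\uparrow}_\alpha$). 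Saying $R$ ``is read off directly from the quasi-projection terms'' is exactly where the real work lies: the quasi-projections live in the algebra, not on the base, and turning them into a genuine membership-like relation on $U$ that validates both directed cylindrifications is essentially what S\'agi's direct construction does. So your plan does not sidestep the hard part; it relocates it to a place where the $\Lf_\omega$ machinery offers no help, and you would end up reconstructing S\'agi's argument anyway. You correctly flag this as the main obstacle, but it is not a residual technicality --- it is the theorem.
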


$\CA^{\uparrow}_3$ denotes the variety of directed cylindric algebras of dimension $3$ 
as defined in \cite{Sagi} definition 3.9. In \cite{Sagi}, it is proved that
$\CA^{\uparrow}_3=\RCA^{\uparrow}_3.$ A set of axioms is formulated on p. 868 in \cite{Sagi}. 
Let $\A\in \CA^{\uparrow}_3$. 
Then we have quasi-projections 
$p,q$ defined on $\A$ as defined in \cite{Sagi} p. 878, 879. We recall their definition, which is a little bit complicated because 
they are defined as formulas in the corresponding second 
order logic.
Let $\cal L$ denote the untyped logic corresponding to directed $\CA_3$'s as  defined p.876-877 in \cite{Sagi}. It has only $3$ variables. 
There is a correspondance between formulas (or formual schemes)  in this language and $\CA^{\uparrow}_3$ terms. 
This is completely analgous to the corresponance between $\RCA_n$ terms and first order formulas containing only $n$ variables.
For example $v_i=v_j$ corresponds to $d_{ij}$, $\exists^{\uparrow} v_i(v_i=v_j)$ correspond to ${\sf c}^{\uparrow}_i d_{ij}$.
In \cite{Sagi} the following formulas (terms) are defined:

\begin{definition} Let $i,j,k \in 3$ distinct elements. 
We define variable--free $RCA^{\uparrow}_{3}$ terms as follows:
\begin{tabbing}
\indent \= $ v_{i} = \{ \{ v_{j} \}_{R} \}_{R}$ \ \ \= is \indent \= $\exists v_{k}( v_{k} = \{ v_{j} \}_{R} \wedge v_{i} = \{ v_{k} \}_{R})$, \kill
\> $ v_{i} \in_R v_{j}$ \indent \> is \> $\exists^{\uparrow}v_{j}(v_{i}=v_{j})$, \\
\> $ v_{i} = \{ v_{j} \}_{R}$ \> is \> $\forall v_{k}( v_{k} \in_R v_{j} \Leftrightarrow v_{k}=v_{j}) $, \\
\> $ \{ v_{i} \}_{R} \in_R v_{j}$ \> is \> $\exists v_{k}( v_{k} \in_R v_{j} \wedge v_{k} = \{ v_{i} \}_{R})$, \\
\> $ v_{i} = \{ \{ v_{j} \}_{R} \}_{R}$ \> is \> $\exists v_{k}( v_{k} = \{ v_{j} \}_{R} \wedge v_{i} = \{ v_{k} \}_{R})$ ,\\
\> $ v_{i} \in_R \cup v_{j}$ \> is \> $\exists v_{k}(v_{i} \in_R v_{k} \wedge v_{k} \in_R v_{j})$.
\end{tabbing}
\end{definition}

Therefore $pair_{i}$ (a pairing function) can be defined as follows: \\

\indent $\exists v_{j} \forall v_{k}( \{ v_{k} \}_{R} \in_R v_{i} \Leftrightarrow v_{j} = v_{k}) \ \wedge $ \\
\indent $\forall v_{j} \exists v_{k}( v_{j} \in_R v_{i} \Rightarrow v_{k} \in_R v_{j} ) \ \wedge $ \\
\indent $\forall v_{j} \forall v_{k}( v_{j} \in_R \cup v_{i} \ \wedge \ \{ v_{j} \} \not\in_{R} v_{i} \ \wedge \ v_{k} \in_R \cup v_{i} \ \wedge \ \{ v_{k} \} \not\in_{R} v_{i} \Rightarrow v_{j} = v_{k} )$. \\

It is clear that this is a term built up of diagonal elements and directed cylindrifications. 
The first quasi-projection  $v_{i} = P(v_{j})$ can be chosen as: \\
\\
\indent $pair_{j} \ \wedge \ \forall^{\downarrow} v_{j} \exists^{\downarrow} v_{j} (v_{i} = v_{j})$. \\
\\
and the second quasiprojection  $v_{i} = Q(v_{j})$ can be chosen as: \\
\\
\centerline{$pair_{j} \ \wedge \ (( \forall v_{i} \forall v_{k} ( v_{i} \in_R v_{j} \ \wedge \ v_{k} \in_R v_{j}  \Rightarrow v_{i} = v_{k} )) \Rightarrow v_{i} = P(v_{j})) \ \wedge $} \\
\centerline{ $(\exists v_{i} \exists v_{k}( v_{i} \in_R v_{j} \ \wedge \ v_{k} \in_R v_{j} \ \wedge \  v_{i} \not= v_{k}) \Rightarrow (v_{i} \not= P(v_{j}) \ \wedge \ \exists^{\downarrow} v_{j} \exists^{\downarrow} v_{j}(v_{i} = v_{j})))$.}

\begin{theorem}
Let $\B$ be the relation algebra reduct of $\A$; then $\B$ is a relation algebra, and the 
variable free terms corresponding to the formulas  $v_i=P(v_j)$ and $v_j=Q(v_j)$ 
call  them $p$ and $q$, respectively, are quasi-projections.
\end{theorem}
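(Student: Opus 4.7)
The plan is to first argue that $\B = \Ra\A$ is a relation algebra purely from the CA-part of $\A$, and then to verify the three quasi-projection identities by representing $\A$ concretely and translating each identity into a first-order sentence about the pairing functions in an extensional P-structure.

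For the RA-reduct claim, observe that $\A \in \CA^{\uparrow}_3$ has an underlying $\CA_3$ obtained by forgetting the directed cylindrifications ${\sf c}^{\uparrow}_i$ and ${\sf c}^{\downarrow}_i$. Applying Definition~\ref{RA} with $n=3$ to this $\CA_3$ gives the classical Tarski--Maddux construction, so $\B$ is automatically a relation algebra. The only subtlety is confirming that the algebraic terms corresponding to the formulas $v_i = P(v_j)$ and $v_i = Q(v_j)$ land in $\Nr_{1,2}\A$; this follows because each such formula has exactly two free variables, so the corresponding term uses only ${\sf c}_0$ on non-free positions and the resulting element is $\{1,2\}$-dimensional. (If the free variables are not already $v_1, v_2$, one substitutes.)

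For the quasi-projection equations, I would invoke Theorem~\ref{rep}: since $\CA^{\uparrow}_3 = \RCA^{\uparrow}_3$, we may assume without loss that $\A$ is a subdirect product of full directed cylindric set algebras whose base structures $\langle U; R\rangle$ are extensional P-structures. The identities $\breve{p};p \leq 1'$, $\breve{q};q \leq 1'$ and $\breve{p};q = 1$ are preserved by $\bf S$ and $\bf P$, so it suffices to verify them on a single set algebra. There $p$ and $q$ interpret as binary relations $P^U, Q^U \subseteq U^2$ explicitly described by Sagi's formulas. The first two identities say that $P^U$ and $Q^U$ are functional: whenever $P^U(a,b)$ and $P^U(a,c)$ one has $b=c$, and likewise for $Q^U$. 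This follows by unpacking the defining formulas, since the predicate $\mathit{pair}_j$ together with the universal block forces $b$ to be the unique ``first component'' of the set coded by $a$, while $Q^U$ picks out the unique ``second component'' (using extensionality to guarantee uniqueness of the code). The identity $\breve{p};q = 1$ translates into: for all $b,c \in U$ there exists $a \in U$ with $P^U(a,b)$ and $Q^U(a,c)$, which is exactly the pairing axiom of a (weak) P-structure.

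The hardest part will be the case analysis built into Sagi's formula for $Q(v_j)$, which splits on whether the ``pair'' $v_j$ codes a singleton (in which case $Q$ coincides with $P$) or a genuine two-element set (in which case $Q$ returns the other element). One must check that in both branches the witnesses supplied by the P-structure axiom actually satisfy the defining formulas of $P$ and $Q$. A branch-by-branch verification, parallel to the one sketched in Section 1 for the QRA formulas $p_0, p_1$, and relying on the extensionality of $\langle U; R\rangle$ to exclude spurious codings, completes the argument.
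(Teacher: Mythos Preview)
Your overall strategy---pass to set algebras via $\CA^{\uparrow}_3 = \RCA^{\uparrow}_3$ and check the quasi-projection identities concretely on a base $\langle U;R\rangle$---is exactly the route the paper takes. The paper records Sagi's meanings
\[
(v_i=P(v_j))^{\A}=\{s\in {}^3U:\ s_j=(a,b)_R,\ s_i=a\},\qquad
(v_i=Q(v_j))^{\A}=\{s\in {}^3U:\ s_j=(a,b)_R,\ s_i=b\},
\]
observes that $P,Q$ are functional and that the P-structure axiom gives $\breve{p};q=1$, and transfers back because the set algebras generate the variety.

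There is one genuine slip. You claim that applying Definition~\ref{RA} with $n=3$ to the underlying $\CA_3$ makes $\B$ ``automatically a relation algebra''. This is false for abstract $\CA_3$'s: associativity of the term-defined composition can fail in dimension~$3$ (the inclusion $\Ra\,\CA_n\subseteq\RA$ holds only from $n\geq 4$ upward; the paper itself invokes $\CA_4$ for this purpose elsewhere). The fix is the same representability move you already use for the quasi-projection equations: in a directed cylindric \emph{set} algebra the $\Ra$-operations are honest relational composition and converse, so the reduct is a genuine relation set algebra, and the RA axioms survive $\mathbf S$ and $\mathbf P$. A minor related point: the ordinary cylindrifications are not part of the $\CA^{\uparrow}_3$ signature to be ``forgotten''---they are term-definable from $c_i^{\uparrow},c_i^{\downarrow}$ using the weak-P-structure axiom.
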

\begin{proof} 
One proof is very tedious, though routine. One translates the functions as variable free terms in the language of $\CA_3$ and use 
the definition of composition and converse in the $\RA$ reduct, to verify that they are quasi-projections.
Else one can look at their meanings on set algebras, which we recall from Sagi \cite{Sagi}.
Given a cylindric set algebra $\cal A$ with base $U$ and accessibility relation $R$
$$(v_i=P(v_j))^A=\{s\in {}^3U: (\exists a,b\in U)(s_j=(a,b)_R, s_i=a\}$$
$$(v_i=Q(v_j)^A=\{s\in {}^3U: (\exists a,b\in U)(s_j=(a,b)_R, s_i=b\}.$$
First $P$ and $Q$ are functions, so they are functional elements. 
Then it is clear that in this set algebras that $P$ and $Q$ are quasi-projections. 
Since $\RCA^{\uparrow}_3$ is the variety generated by set algebras, they have the same meaning in the class $\CA^{\uparrow}_3.$
\end{proof}

Now we can turn the class around. Given a $\QRA$ one can define a directed $\CA_n$, for every finite $n\geq 2$.
This definition is given by N\'emeti and Simon in \cite{NS}. 
It is vey similar to Simon's definition above (defining $\CA$ reducts in a $\QRA$, 
except that directed cylindrifiers along a relation $R$ are 
implemented.

\begin{theorem} The concrete category $\QRA$ 
with morphisms injective homomorphisms, and that of $\CA^{\uparrow}$  with morphisms also injective homomorphisms are equivalent.
in particular $\CA^{\uparrow}$ of dimension $3$ is equivalent to $\CA^{\uparrow}$ for $n\geq 3$.
\end{theorem}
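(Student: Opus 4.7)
The plan is to exhibit functors in both directions and show they are mutually quasi-inverse. In one direction, define $G:\CA^{\uparrow}_3 \to \QRA$ by sending $\A$ to its relation-algebra reduct $\Ra\A$. The previous theorem shows that $\Ra\A$ is indeed a $\QRA$, with quasi-projections $p,q$ arising from the variable-free terms for $P$ and $Q$; an injective homomorphism $h:\A\to \B$ in $\CA^{\uparrow}_3$ restricts to an injective homomorphism of relation algebras preserving the distinguished elements $p$ and $q$, so $G$ acts on morphisms. In the other direction, define $F:\QRA \to \CA^{\uparrow}_3$ by the Nemeti--Simon construction in \cite{NS}: starting from ${\bf Q}\in \QRA$, one builds terms analogous to $\epsilon^{(n)},\pi_i^{(n)},t_i^{(n)}, d_{ij}^{(n)}$, but now imitating directed cylindrifications along the internal relation $R$ coded by $p,q$. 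An injective $\QRA$-homomorphism preserves these terms, so $F$ is functorial.

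Next I would verify the two natural isomorphisms. For $G\circ F \cong \mathrm{id}_{\QRA}$: if ${\bf Q}\in \QRA$, the $\Ra$-reduct of $F({\bf Q})$ is built from the same two-dimensional elements of ${\bf Q}$ used to build the cylindric apparatus, and by construction composition and converse in $\Ra F({\bf Q})$ are computed by the same terms $s_0^{n-1},s_0^{n-2}$ applied to the directed cylindric operations, yielding back the original $\QRA$-operations. For $F\circ G \cong \mathrm{id}_{\CA^{\uparrow}_3}$, given $\A\in \CA^{\uparrow}_3$, both $\A$ and $F(\Ra\A)$ are directed $\CA_3$'s with the same underlying set of three-dimensional elements coded by quasi-projections, and the uniqueness of dilation proved earlier (the $\Ra\mathbf{S}$ property together with the fact that $\Sg^{\Ra\A}X = \Ra\Sg^{\A}X$) forces $F(\Ra\A)\cong \A$ via the identity on generators. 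The naturality squares then commute because all isomorphisms are defined by the same term operations.

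For the ``in particular'' clause, observe that the Nemeti--Simon construction produces a directed cylindric algebra $F_n({\bf Q})\in \CA^{\uparrow}_n$ for every finite $n\geq 3$, and conversely for every $n\geq 3$ the $\Ra$-reduct of a $\CA^{\uparrow}_n$ still yields a $\QRA$. By the same argument as above, each $F_n$ is quasi-inverse to $\Ra$, so the categories $\CA^{\uparrow}_n$ for all $n\geq 3$ are equivalent to $\QRA$ and hence to each other; composing $F_m$ with $\Ra$ gives the explicit equivalence between $\CA^{\uparrow}_3$ and $\CA^{\uparrow}_n$.

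The main obstacle I anticipate is the direction $F\circ G\cong \mathrm{id}$. The subtlety is not the algebraic identities but showing that the directed cylindrifications ${\sf c}^{\uparrow}_i, {\sf c}^{\downarrow}_i$ in $F(\Ra\A)$ (built as relational compositions with $p,q$-terms inside $\Ra\A$) coincide with the original ${\sf c}^{\uparrow}_i, {\sf c}^{\downarrow}_i$ of $\A$. One reduces to the set-algebra case using Theorem \ref{rep}, where both sides have transparent set-theoretic meanings: the directed cylindrifications defined via $R$ and those defined via the quasi-projection terms both select the same tuples, because $p,q$ encode exactly the pairing underlying $R$. Since $\RCA^{\uparrow}_3$ is generated as a variety by such set algebras, the identity lifts.
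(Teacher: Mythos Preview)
Your approach is essentially the same as the paper's: define functors $F:\QRA\to\CA^{\uparrow}_3$ via the N\'emeti--Simon construction and $G:\CA^{\uparrow}_3\to\QRA$ via the $\Ra$-reduct equipped with the $P,Q$-terms as quasi-projections, then argue they are mutually quasi-inverse; the ``in particular'' clause is handled identically. Your treatment is in fact more detailed than the paper's sketch, and your identification of $F\circ G$ as the delicate direction, together with the proposed reduction to set algebras via Theorem~\ref{rep}, is a reasonable strategy the paper does not spell out.

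There is one point where you and the paper part ways. You treat $G\circ F\cong\mathrm{id}_{\QRA}$ as straightforward, asserting that $\Ra F({\bf Q})$ recovers the original operations and that morphisms ``preserve the distinguished elements $p$ and $q$''. The paper explicitly warns otherwise: passing $\QRA\to\CA^{\uparrow}\to\QRA$ ``we may not get back exactly to the $\QRA$ we started off with, but the new quasi-projections are definable from the old ones''. The quasi-projections in $\Ra F({\bf Q})$ are the $P,Q$-terms built from the directed cylindrifications of $F({\bf Q})$, which were themselves built from the original $p,q$; hence the new projections are term-definable from the old but need not coincide with them. Since in this paper $\QRA$ is a \emph{class} of relation algebras (existence of projections is a property, not signature), this does not block the natural isomorphism, but your phrasing treats $p,q$ as distinguished constants to be preserved by morphisms, which they are not. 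The paper's remark about inter-definability is exactly what fills this gap.
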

\begin{proof} Given $\A$ in $\QRA$ we can associte a directed $\CA_3$, homomorphism are restrictions and vice versa; these are inverse Functors.
However, when we pass from an $\QRA$ to a $\CA^{\uparrow}$ and then take the $\QRA$ reduct, 
we may not get back exactly to the $\QRA$ we started off with,
but the new quasi projections are definable from the old ones. 
Via this equivalence, we readily  conclude that $\RCA_3\to \RCA_n$ are also equivalent.
\end{proof}

\begin{corollary} The class $\CA^{\uparrow}$ has the super amalgamation property.
\end{corollary}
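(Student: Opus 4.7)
The plan is to leverage the categorical equivalence established in the previous theorem between $\QRA$ (with injective homomorphisms as morphisms) and $\CA^{\uparrow}$ (with the same morphisms), together with the already-proved fact that $\QRA$ has $SUPAP$. Since superamalgamation is formulated purely in terms of injective homomorphisms and the order structure, it is preserved under an equivalence of categories that respects the ordering on objects.

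Concretely, suppose we are given $\A,\B,\C\in\CA^{\uparrow}$ with injective homomorphisms $f:\C\to\A$ and $g:\C\to\B$. First I would apply the inverse functor $F$ of the previous theorem to obtain $F\A,F\B,F\C\in\QRA$ together with injective homomorphisms $F f:F\C\to F\A$ and $F g:F\C\to F\B$. Since $\QRA$ has $SUPAP$, there exist $\D'\in\QRA$ and injective homomorphisms $m':F\A\to\D'$, $n':F\B\to\D'$ with $m'\circ Ff=n'\circ Fg$, and moreover the superamalgamation condition: whenever $m'(a)\leq n'(b)$ with $a\in F\A$, $b\in F\B$, there is $t\in F\C$ with $a\leq Ff(t)$ and $Fg(t)\leq b$. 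Now pushing everything back through the opposite functor $G$ (with $G\D'\in\CA^{\uparrow}$), we obtain the desired superamalgam $\D=G\D'$ in $\CA^{\uparrow}$ together with $m=Gm'$ and $n=Gn'$ satisfying $m\circ f=n\circ g$.

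The remaining point is to verify that the superamalgamation inequality transfers across the equivalence. Because the equivalence is, by the theorem above, essentially given by taking the $\RA$-reduct in one direction and forming the term-defined $\CA^{\uparrow}$ in the other, the underlying lattice orderings on an object and its image are canonically isomorphic (elements of the $\CA^{\uparrow}$ correspond to $2$-dimensional elements of the associated $\QRA$, and composition, converse, diagonals and directed cylindrifications are all term-definable from one another). In particular, for $a\in\A$ and $b\in\B$ with $m(a)\leq n(b)$ in $\D$, the same inequality $m'(a)\leq n'(b)$ holds in $\D'=F\D$, so one obtains $t\in F\C=\C$ witnessing the superamalgamation condition, and this same $t$ then witnesses $a\leq f(t)$ and $g(t)\leq b$ back in $\CA^{\uparrow}$.

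The main obstacle, and the only nontrivial point beyond invoking the equivalence, is ensuring that the new quasi-projections obtained after passing $\QRA\to\CA^{\uparrow}\to\QRA$ do not affect the argument; the previous theorem remarks that they may differ from the originals but are definable from them, so the generated subalgebras coincide and the superamalgam from $\QRA$ pulls back cleanly. Once this bookkeeping is settled, the corollary follows immediately.
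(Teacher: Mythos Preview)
Your proposal is correct and follows essentially the same approach as the paper: transfer SUPAP from $\QRA$ to $\CA^{\uparrow}$ via the categorical equivalence of the preceding theorem, using that the functors preserve the underlying order. The paper's own proof is the one-line remark that the functor from $\QRA$ to $\CA^{\uparrow}$ preserves order, and your argument is simply a careful unpacking of what that entails.
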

\begin{proof} The functor from $\QRA$ to $\CA^{\uparrow}$ preserves order.
\end{proof}

\section{Godel's first for finite variable fragments}

This section is a summary of work of N\'emeti \cite{N}, reported in \cite{AN}.

There has been some debate over the impact of G\"odel's incompleteness theorems on Hilbert's Program, 
and whether it was the first or the second incompleteness theorem that delivered the coup de grace. 

Undoubtedly the opinion of those most directly involved in the developments 
were convinced that the theorems did have a decisive impact. 

G\"odel announced the second incompleteness theorem in an abstract published in October 1930: 
no consistency proof of systems such as Principia, Zermelo-Fraenkel set theory, 
or the systems investigated by Ackermann and von Neumann is possible by methods 
which can be formulated in these systems.

G\"odel's theorems have a profound impact Hilbert's program. Through a careful G\"odel 
coding of sequences of symbols (formulas, proofs), G\"odel showed that in theories $T$ which contain a sufficient amount of arithmetic, 
it is possible to produce a formula $Pr(x, y)$ which "says" that $x$ is (the code of) a proof of (the formula with code) $y$. 
Specifically, if $0 = 1$ is the code of the formula $0 = 1$, 
then $ConT = \forall (x \neg Pr(x,0 = 1))$ may be taken to "say" that $T$ is consistent (no number is the code of a derivation in $T$ of $0 = 1$). 
The second incompleteness theorem $(G2)$ says that under certain assumptions about $T$and the coding apparatus, $T$ does not prove $ConT$. 

This shattered Hilbert's hopes of proving that set theory is consistent, by finitary means, presumably formalizable 
in set theory (it is hard to visualize 'finitary means" that {\it is not} formalizable in set theory, or even Peano arithmetc).  
This means that mathematicians will be always threatened that one day, 
some mathematician, or rather set-theoretician,  will find an inconsistency. Nevertheless, 
with the amount of research done in set theory, in the last decades, deems this possibility as far fetched, and some mathematicians go 
as far as to say impossible. This is a fair view, if there were a consistency we would have probably stumbled upon it by now.

In the above cited results, the ideas are not too difficult, but implementing the details is highly technical and complicated.
N\'emeti generalized Godel's first theorem as follows:

\begin{theorem}\label{n} 
\begin{enumarab}

\item There is a computable, 
structural translation $\tr: L_{\omega}\to L_3(E, 2)$ such that $\tr$ has a recursive image and the following are 
true for all sets of sentences $Th\cup \{\phi\}$ in $L_{\omega}$

(a) $Th\models \phi\longleftrightarrow \tr(Th)\vdash_n\tr(\phi).$

(b) $Th\models \phi\longleftrightarrow \tr(Th)\models \tr(\phi).$

\item There is a computable, structural translation function
$\tr: L_{\omega}(E,2)\to L(E, 2)$ such that 
$\tr$ has a recursive range and the following (c) and (d) are true

(c) Statements (a) and (b) above hold and $Th\models \neg \tr(\bot).$ 
Furthermore, $ZF\models \neg\tr(\bot).$

(d) $\neg\tr(\bot) \models \phi\longleftrightarrow \tr(\phi)$ 

\end{enumarab}
\end{theorem}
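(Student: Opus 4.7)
The plan is to use the quasi--projections $p_0, p_1$ constructed in Section~1 (via the brute force set--theoretic formulas in three variables) to encode finite tuples of elements as single elements, thereby simulating the $\omega$--variable logic $L_\omega$ inside the three--variable logic $L_3(E,2)$. First I would fix, once and for all, term abbreviations $\langle x,y\rangle$ and projection terms so that the pair $\langle a,b\rangle$ and its coordinates $p_0,p_1$ are definable formulas of $L_3(E,2)$. Because only three variables are available, this coding must be implemented by variable recycling; this is exactly what the definitions of $pair$, $p_0$, $p_1$ given above in the language of set theory make possible.

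Next, I would define the translation $\tr$ by recursion on the structure of a formula $\phi\in L_\omega$. A tuple $(v_0,\dots,v_{k-1})$ is coded by a single variable $v$ standing for the iterated pair; an atomic formula $R(v_{i_1},\dots,v_{i_k})$ is translated by extracting the relevant coordinates via iterated projections applied to $v$, and replacing $R$ by a binary--$E$ formula representing its graph in the chosen set--theoretic coding; Boolean connectives translate trivially; and $\exists v_i\phi$ is translated as an existential over codes whose $j$th projection agrees with that of $v$ for $j\neq i$ while the $i$th is free. This scheme is exactly the syntactic shadow of the algebraic content of Theorem~\ref{neat}: the three--variable machinery together with quasi--projections already captures every dimension. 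Recursiveness of $\tr$ and of its range is immediate from the recursive definition.

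With $\tr$ defined, the four clauses are proved as follows. Item (b) is a Tarski--style induction on complexity: the inductive step for quantifiers uses that, in any model whose pairing satisfies the P--structure and extensionality conditions of Section~2, coordinate extraction is single--valued and surjective onto finite tuples. Item (a) is then the completeness of the three--variable calculus $\vdash_3$ for $\CA^{\uparrow}_3$ in the form of Theorem~\ref{rep}, combined with the observation that the inductive step in (b) is \emph{witnessed} by a derivation rather than only by truth. For item~2, one works in the pure language $L(E,2)$ and takes $\tr(\bot)$ to encode failure of the pairing/extensionality conditions; then $\neg\tr(\bot)$ is precisely the finite conjunction of axioms asserting that $E$ induces an extensional weak P--structure with quasi--projections. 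Since the hereditarily finite sets in $ZF$ form such a structure, $ZF\models\neg\tr(\bot)$. Item (d) is then a relativization lemma: over $\neg\tr(\bot)$ the coding is ``transparent'' and $\tr(\phi)\leftrightarrow \phi$ holds.

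The main obstacle will be item (a), the proof--theoretic equivalence: one must show that a proof in the full $\omega$--variable calculus can be mimicked step by step in the three--variable calculus \emph{after} translation. This is delicate because the variable--scarcity phenomenon responsible for the non--finite axiomatizability of $\RCA_n$ shows that in general three variables are not enough. The point is that $\tr$ is tailored so that every move in the ambient $L_\omega$--proof --- substitution, generalization, cylindrification --- corresponds via the quasi--projections to a move available in $L_3(E,2)$, precisely because the $\omega$--dimensional dilation constructed in Section~1 lies inside the $\Ra$--reduct generated by the original three--dimensional algebra. Making this line--by--line correspondence explicit is the heart of the argument; I would refer to \cite{Nemeti} and \cite{AN} for the bookkeeping and restrict the write--up to highlighting the algebraic input from Section~1.
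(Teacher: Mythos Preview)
The paper does not actually prove Theorem~\ref{n}: it is stated as N\'emeti's result, with the attribution ``This section is a summary of work of N\'emeti \cite{N}, reported in \cite{AN}'' preceding it, and immediately afterwards the paper writes ``Using this translation map he proves\ldots'' and moves on. So there is no proof in the paper to compare your proposal against; the theorem functions as a cited black box.

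That said, your outline is in the right spirit --- the pairing technique via the set--theoretic $p_0,p_1$ is indeed the engine --- but there is a genuine confusion that would derail the argument as written. You invoke Theorem~\ref{rep} (finite axiomatizability of $\RCA^{\uparrow}_\alpha$) to handle clause~(a), but that theorem is about \emph{directed} cylindric algebras and the logic $\CA^{\uparrow}_3$, whereas Theorem~\ref{n} concerns ordinary three--variable first--order logic $L_3(E,2)$ with its standard provability relation $\vdash_3$. These are different calculi: plain $\vdash_3$ is famously incomplete for semantic consequence, and the directed cylindrifiers are not available in it. The point of N\'emeti's argument is that the pairing formulas, being themselves $L_3$--formulas in the single binary relation $E$, allow one to \emph{provably in $\vdash_3$} simulate the extra variables --- so the completeness is recovered not by appealing to a stronger calculus, but by showing that each step of an $L_\omega$--derivation translates to a $\vdash_3$--derivation modulo the pairing axioms. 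Your final paragraph gestures at this (``line--by--line correspondence''), but routing it through Theorem~\ref{rep} short--circuits the actual work. Likewise, the Section~1 material you cite (Theorem~\ref{neat} and the $\Ra S$ property) lives in $\QRA$ and abstract $\CA_n$'s, not in the syntactic calculus $\vdash_3$; bridging that gap is precisely what \cite{N} does and what your sketch leaves implicit.
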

Using this translation map he proves:
\begin{theorem}
There is a formula $\psi\in L_3$ such that no consistent 
recursive extension $T$ of $\psi$ is complete, and moreover, 
no recursive extension of $\psi$ separates the $\vdash$ consequences
of $\psi$ from the $\psi$ refutable sentences.
\end{theorem}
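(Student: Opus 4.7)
The plan is to transfer essential undecidability of a well-chosen finitely axiomatized theory in full first order logic to the three variable fragment via the translation $\tr$ of Theorem \ref{n}. Concretely, I would start from Robinson's arithmetic $Q$ (or, what is the same from the present point of view, the finite fragment of set theory captured by the pairing machinery of \S1), viewed as a finite set of sentences in $L_\omega(E,2)$. Applying the computable, structural translation $\tr$ of Theorem \ref{n} term by term yields finitely many formulas in $L_3(E,2)$; let $\psi$ be their conjunction. Theorem \ref{n}(c) gives $\psi \models \neg \tr(\bot)$, so $\psi$ is consistent.

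The main step is the pullback argument. Given any consistent recursive extension $T \supseteq \{\psi\}$ in $L_3$, define $T^\# := \{\phi \in L_\omega : \tr(\phi) \in T\}$. Because $\tr$ is computable with recursive image, $T^\#$ is a recursive set of sentences extending $Q$; because $\psi \in T$ and $\psi \models \neg\tr(\bot)$, clauses (a) and (c) of Theorem \ref{n} guarantee that $T^\#$ is consistent. If $T$ were complete, then for every $L_\omega$ sentence $\phi$ either $\tr(\phi)$ or $\tr(\neg \phi)$ lies in $T$; translating back through (a) this would force completeness of $T^\#$, contradicting the essential undecidability of $Q$. Hence $T$ is incomplete, and the explicit sentence witnessing incompleteness is $\tr(\phi_0)$ where $\phi_0$ is any Rosser sentence of $T^\#$.

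For the stronger separation claim, suppose toward contradiction that some recursive extension $S$ of $\psi$ separates $\{\chi \in L_3 : \psi \vdash \chi\}$ from $\{\chi \in L_3 : \psi \vdash \neg \chi\}$. Then the $\tr$-preimage of $S$ is a recursive set of $L_\omega$ sentences that, by (a), separates the $Q$-provable sentences from the $Q$-refutable ones, contradicting the Rosser-style essential undecidability of $Q$. The main obstacle I expect is the careful verification that the pullback really produces a recursive, consistent, \emph{extension} of $Q$ — this relies crucially on the fact that $\tr$ has a recursive range, so that membership in $T^\#$ is decidable via a decision procedure that first checks whether an input is of the form $\tr(\phi)$, and on clause (c) ensuring $\tr$ does not collapse consistency. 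Once those points are in place, the transfer of both incompleteness and Rosser separation follows uniformly from clauses (a) and (b) of Theorem \ref{n}.
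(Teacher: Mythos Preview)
Your argument is essentially correct and is the natural way to derive the theorem \emph{from} Theorem~\ref{n}: translate a finite axiomatization of an essentially undecidable theory (Robinson's $Q$) into $L_3$, take the conjunction as $\psi$, and pull back any putative recursive completion or separating set along $\tr$. The points you flag as needing care --- recursiveness of $T^{\#}$ via the recursive range of $\tr$, consistency via clause~(c), and the fact that ``structural'' forces $\tr(\neg\phi)$ and $\neg\tr(\phi)$ to be provably equivalent over $\psi$ --- are exactly the right ones, and they go through.

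The paper, however, takes a genuinely different route. Rather than invoking Theorem~\ref{n} as a black box, the proof \emph{unfolds} the translation concretely, and only for $L_4$: it writes out the set-theoretic pairing formulas $p_0,p_1$ explicitly, builds a single sentence $\lambda'$ in $L_\omega(E,2)$ that interprets Robinson's arithmetic inside set theory, passes to the relation-algebraic term $\eta = r(\tr(\lambda))\cdot\pi_{RA}$ (where $\pi_{RA}$ asserts that $p,q$ are quasi-projections), and then pushes $\eta$ into $\Ra\Fm_4$ via the free $\mathrm{Sim}RA$ on one generator. The resulting $\psi = h(\eta)$ lives in $L_4$ because the $\Ra$-reduct of a $\CA_4$ is already a genuine relation algebra --- this is exactly the step that fails for $\CA_3$ and explains why the paper settles for four variables in the sketch.

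What each approach buys: yours is shorter, works uniformly for $L_3$, and makes the dependence on Theorem~\ref{n} transparent; the paper's is an explicit construction that exhibits the incompletable sentence as a concrete relation-algebraic term and shows where the quasi-projection machinery actually enters. If you want to match the paper you would need to descend into that construction; if the goal is just to establish the theorem given Theorem~\ref{n}, your pullback argument is the cleaner path.
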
 
\begin{proof} We give a sketch of proof for $L_4$. This is implicit in the Tarski Givant approach, 
when they interpreted $ZF$ in $RA$. $L_4$ is very close to $RA$ but not quite
$RA$, it s a little bit stronger. The technique is called the {\it pairing} technique, which uses quasi projections to code extra variable, 
establishing the completeness theorem
above for $\vdash_n$. 

We have one binary relation $E$ in our langauge; for convenience, 
we write $x\in y$ instead of $E(x,y)$, to remind ourselves that we are actually working in the language
of set theory. 
We define certain formulas culminating in formulating the axioms of a finite undecidacle theory, better known as Robinson's arithmetic 
in our language. These formulas are taken from N\'emeti.
We need to define, the quasi projections. Quoting Andr\'eka and N\'emeti, we do this by 'brute force'.
We now formulate the desired $\lambda$.

Havng defined the pairs,  we g on as follows:

$$x\in Ord= : \text {`` $x$ is an ordinal, i.e. $x$ is transitive and $\in$ is a total ordering on $x$},$$
$$x\in Ford=:x\in Ord\land \text { ``every element of $x$ is a successor ordinal "}$$
$$\text { i.e. $x$ is a finite ordinal }.$$
$$x=0=: ``x\text { has no element }"$$
$$sx=z=:z=x\cup \{x\},$$
$$x\leq y=:x\subseteq y,$$
$$x<y=: x\leq y\land x\neq y,$$
$$x+y=z=:\exists v(z=x\cup v\land x\cap v=0 \land$$
$$\text {``there exists a bijection between $v$ and $y$"})$$
$$x\cdot y=z=:\text { ``there is a bijection between $z$ and $x\times y$}"$$
$$x\underline{exp} y=z: \text { there is a bijection between 
$z$ and the set of all functions from $y$ to $x$}"$$
Now $\lambda$' is the formula saying that:
$0, s, +, \cdot, \underline{exp}$ are functions of arities $0,1,2,2,2$ on $Ford$
and 
$$(\forall xy\in Ford)[sx\neq 0\land sx=sy\to x=y)\land (x<sy\longleftrightarrow x\leq y)
\land$$
$$\neg(x<0)\land (x<y\lor x=y\lor y<x)\land (x+0=x)\land (x+sy=s(x+y))\land (x.0=0)$$
$$\land (x\cdot sy=x\cdot y+x)\land (x\underline{exp} 0=s0)\land (x\underline{exp} sy=x\underline{exp} y\cdot x)].$$
Now the existence of the desired incompletable $\lambda$ readily follows:
$\lambda\in Fm_{\omega}^0$.
Let $p=r(p_0(x,y))$ and $q=r(p_1(x,y))$ be the pairing functions as defined above,. 
where $r$ be the recursive function mapping $Fm_3^2$ into $RAT.$ (It is not hard to construct such an function, that also preserves meaning).
$$\pi_{RA}=(p^{\lor};p\to Id)\cdot (q^{\lor};q\to Id)\cdot (p^{\lor};q).$$
Then $\pi_{RA}\in RAT$ since $p_i(x,y)\in Fm_3^2.$
Let $\lambda\in Fm_{\omega}^0$ be inseparable and let
$\eta=(r(\tr(\lambda))\cdot \pi_{RA}$. 
From the definition of $r$ and $f$ we have 
$\eta\in RAT_1$. Let $\Fm_4$ be the algebra of resricted formulas using $4$ variables.
Let ${\cal G}={\cal F}r_1SimRA.$
Let $h: {\cal G} \to \Ra\Fm_4$ be the 
homomorphism that takes the free generator of $\cal G$ to $x\in y.$
Let $\psi=h(\eta)$. Then $\psi\in Fm^{\Lambda_3.}$ 
$\psi$ is the desired formula. (Here we use that the $\Ra$ reduct of a $CA_4$ is a relation algebra.
\end{proof}
The generalization of G\"odel's first theorem, has a very natural algebraic counterpart;  the least that can be said for his second.
The following is slighly new and it depends only on Godel's incompleteness theorem for $L_4$. The free algebras adressed in the next theorem are called dimension restricted free 
algebras.
\begin{corollary}  
\begin{enumroman}
\item Let $\omega\geq m>3$. Let $\beta$ be a cardinal $<\omega$ 
and $\rho:\beta\to \wp(3)$ such that $\rho(i)\geq 2$ for some $i\in \beta.$
Then $\Fr_{\beta}^{\rho}S\Nr_3\CA_m$ is not atomic. 

\item Let $m\geq n>3$ and $\rho:\beta\to \wp(n)$ where $\beta<\omega$ 
and $\rho(i)\geq 2$ for some $i\in \beta$. 
Then $\Fr_{\beta}^{\rho}S\Nr_n\CA_m$ is not atomic.
In particular, $\Fr_{\beta}\CA_4$ and $\Fr_{\beta}\RCA_4$ are not atomic.
\end{enumroman}
\end{corollary}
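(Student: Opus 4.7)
The plan is to use the essentially incomplete formula from the previous theorem as a witness that some element of the free algebra is not above any atom. The hypothesis $\rho(i)\geq 2$ for some $i\in\beta$ is exactly what is needed to realise a binary relation symbol: fix such an $i$ and pick distinct $j,k\in\rho(i)$, so that the free generator $g_i$ may be interpreted as the atomic formula $E(v_j,v_k)$, whose dimension set $\{j,k\}$ is contained in $\rho(i)$ as required. Under this interpretation, every formula $\phi\in L_n(E,2)$ determines a term of the free algebra, and I will write $[\phi]$ for its equivalence class in $\Fr_\beta^\rho S\Nr_n\CA_m$. For the cases $\Fr_\beta\CA_4$ and $\Fr_\beta\RCA_4$ this is just the usual Lindenbaum--Tarski quotient of $L_4(E)$.

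Let $\psi$ be the formula produced by Theorem \ref{n} (in $L_3$ for part (i) after the translation $\tr$, in $L_n$ for part (ii)): $\psi$ is consistent, and no consistent recursive extension of $\psi$ is complete. I would first note that $[\psi]\neq 0$, because consistency of $\psi$ is preserved: the variety $S\Nr_n\CA_m$ has semantic models for $\psi$ (the $n$-neat reducts of $m$-dimensional cylindric set algebras realising $\psi$), so the canonical homomorphism from the formula algebra onto the free algebra does not collapse $[\psi]$ to $0$.

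Now suppose, toward a contradiction, that some atom $a$ of the free algebra satisfies $a\leq [\psi]$. By the universal property of free algebras, $a=[\phi]$ for some formula $\phi$ with $\phi\vdash\psi$ in the calculus corresponding to $S\Nr_n\CA_m$. The atomicity of $a$ is equivalent to saying that $\phi$ is a \emph{complete} formula: for every $\chi$, either $[\chi]\wedge a=a$ or $[\chi]\wedge a=0$, i.e.\ either $\phi\vdash\chi$ or $\phi\vdash\neg\chi$. Consequently the theory $T=\{\chi: \phi\vdash\chi\}$ is a complete consistent extension of $\psi$. Since the proof system is recursively axiomatised (the axioms of $S\Nr_n\CA_m$, or equivalently the associated proof calculus with $m$ variables in proofs, form a recursive set), $T$ is recursively enumerable; completeness then forces the complement of $T$ to be r.e.\ as well, so $T$ is recursive. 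This directly contradicts the essential incompleteness of $\psi$, so no such atom exists and $[\psi]$ sits in the free algebra without any atom below it. This gives non-atomicity.

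The main obstacle, and the step that needs the most care, is the bridge between atoms of $\Fr_\beta^\rho S\Nr_n\CA_m$ and complete recursive theories in the appropriate finite-variable logic. For $\RCA_n$ this follows from the standard representation/completeness theorem, but for $S\Nr_n\CA_m$ with $m>n$ one must appeal to the proof-theoretic characterisation underlying N\'emeti's translation $\tr$ (part (a) of Theorem \ref{n}): provability in the variety $S\Nr_n\CA_m$ matches the $m$-variable proof relation $\vdash_m$ for $n$-variable formulas, so r.e.\ axiomatisability of the variety transfers to r.e.\ axiomatisability of the corresponding deducibility relation. Once this bookkeeping is done, the non-atomicity of $\Fr_\beta\CA_4$ and $\Fr_\beta\RCA_4$ is the clean special case $n=m=4$, and the general statements in (i) and (ii) drop out by the same argument with the inseparable $\psi$ produced in the relevant variable fragment.
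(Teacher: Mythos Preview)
Your approach is correct and is precisely the argument the paper intends: the corollary is stated in the paper without proof, only with the remark that it ``depends only on G\"odel's incompleteness theorem for $L_4$,'' and your proposal supplies exactly the standard translation between atoms of the dimension-restricted free (Lindenbaum--Tarski) algebra and complete, finitely axiomatized theories in the associated finite-variable calculus. One small point to tighten: the atom $a=[\phi]$ may involve all $\beta$ generators, so the complete theory you extract lives in a signature with $\beta$ relation symbols rather than the single $E$; but essential incompleteness of $\psi$ persists under finite expansions of the language, so the contradiction still goes through. Your identification of the bridge between provability in $S\Nr_n\CA_m$ and the recursively axiomatized relation $\vdash_m$ as the delicate step is on target and matches the role of part~(a) of the translation theorem in the paper.
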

\begin{corollary}(Maddux) For each finite  $n\geq 3$, The equational theories of $\sf{Df_n}$ and ${\CA_n}$ are undecidable
\end{corollary}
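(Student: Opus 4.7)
The plan is to reduce the undecidability of validity in full first-order logic to the equational theories in question, using the incompleteness/inseparability result for $L_3$ established just above. The key intermediary is the computable structural translation $\tr : L_\omega \to L_3(E,2)$ of Theorem \ref{n}, which preserves both syntactic and semantic consequence and has recursive range. Under the standard Henkin--Monk--Tarski algebraization, the equational theory of $\CA_n$ corresponds precisely to validity in $n$-variable first-order logic with equality. If the equational theory of $\CA_n$ were decidable, then $\vdash_n$ would be decidable on $L_n$-sentences, and composing with $\tr$ (which maps into $L_3 \hookrightarrow L_n$ for any $n \geq 3$ by ignoring the extra variables) would decide validity in $L_\omega$, contradicting Church's theorem.

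More concretely, I would invoke the inseparability statement directly: in the previous theorem a formula $\psi \in L_3$ was produced whose $\vdash$-consequences are not recursively separable from its refutable sentences. Translating $\psi$ into an equation $e_\psi$ in the free $\CA_n$ on the appropriate generators, and similarly translating an arbitrary $\phi$, the question whether $\CA_n \models e_\psi \to e_\phi$ encodes the consequence relation from $\psi$. A decision procedure for the equational theory of $\CA_n$ would thus produce a recursive set separating $\{\phi : \psi \vdash \phi\}$ from $\{\phi : \psi \vdash \neg\phi\}$, which is impossible.

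For $\Df_n$, the argument is the same in spirit but more delicate, since the diagonal elements $d_{ij}$ used throughout the pairing construction are no longer part of the signature. The strategy is to interpret $\CA_n$-equations inside $\Df_m$ for sufficiently large $m$, simulating equality by means of a definable diagonal-free surrogate: one designates a coordinate not appearing in the target equation and uses its cylindrifications to mimic the substitution-plus-diagonal behaviour that drives the pairing. Alternatively one can re-derive the pairing construction natively in $\Df_n$ using the binary relation $E$ as the only primitive, so that the N\'emeti translation $\tr$ already lands in a diagonal-free fragment. Either way, the same reduction from the inseparable pair yields undecidability.

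The main obstacle is the $\Df_n$ case: the surrogate for equality must be carefully chosen so that the encoded version of $\psi$ retains the inseparability property, i.e.\ the simulation does not collapse distinct elements or introduce new models that would validate refutable $\CA_n$-sentences. Once the diagonal-free pairing is shown to faithfully simulate the equality-based pairing of Section 1, the reduction goes through uniformly, and undecidability of the equational theories of both $\CA_n$ and $\Df_n$ for finite $n \geq 3$ follows at once.
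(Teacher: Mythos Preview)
Your approach is essentially what the paper intends: the corollary is stated without a detailed proof, as an immediate consequence of the translation $\tr$ of Theorem~\ref{n} and the inseparability theorem that follows it. The paper's only comment is the remark that Maddux's \emph{original} proof went by an entirely different route, namely the undecidability of the word problem for semigroups; no argument from the present framework is spelled out. So your reduction via $\tr$ and the algebraization of $\vdash_n$ is exactly the implicit derivation.

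One small point on the $\Df_n$ case: you are right that this is where the work lies, but you need not invent a surrogate for equality from scratch. The relevant input is the result cited as \cite{AN} (``Reducing first order logic to $\Df_3$''), and the remark at the opening of Section~4 that the construction extends to the equality-free setting provided one uses a ternary relation symbol in place of the binary one. In other words, the translation $\tr$ itself admits a diagonal-free version landing in the $\Df_3$ fragment, so the same reduction goes through without any ad hoc simulation of $d_{ij}$ inside $\Df_m$. Your ``alternatively'' clause is the correct route; the first option (interpreting $\CA_n$-equations inside $\Df_m$) would require more care than you indicate and is not what the paper has in mind.
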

Maddux's proof followed an entirely different route, using the undecidability of the word problem for semigroups.

\section{Godel's second for finite variable fragments}
Our work here is inspired by work of Andreka Madarasz and Nemtii, on working out a Godels second incompleteness theorem for certain strong enough axiomatizations of special relativity.
having a periodic object in their model, the succeed to code $N$, and then the rest follows like the classical case.

We work with $n=3$, and we assume that we have equality. All the results extend to the case when we {\it do not} have equaity 
but we have a tenary relation symbol, instead of a binary one. (This follows from theorem \ref{n}). 

Godel's second theorem follows from the first by formalizing the meta mathematical proof of it into the formal system 
whose consistency is at stake. So  such theories should 
be strong enough to encode the proof of the first incompleteness theorem. Roughly the provability relation $p(x,y)$ ($x$ proves $y$) 
not only proves, when it does it can prove that it proves. given a theory $T$ containing arithmetic, let 
$Prb_T(\sigma)$ denotes $\exists x p(x, \sigma)$.
Formally:

\begin{definition} A theory $T$ is strong enough if when 
$T$ proves $\phi$ then $T$ proves that $T$ proves $\phi$
In more detail,
\begin{enumarab}
\item $T$ contains Robinson's arithmetic

\item for any sentence $\sigma$, $T\vdash \sigma$, then $T\vdash Prb_T(\sigma)$
\item for any sentence $\sigma$, $T\vdash (Prb_T(\sigma)\to Prb_TPrb_T(\sigma)$)
\item For any sentences $\rho$ and $\sigma$, $T\vdash Prb_T(\rho\to \sigma)\to (Prb_T\rho\to Prb_T \sigma).$
\end{enumarab}
\end{definition}
Strong theories are strong enough not to prove their consistency, if they are consistent. 
Robinsons arithmetic is not strong enough but $PA$ and $ZF$ are.
So we need to capture at least $PA$ in $L_n$. 
This will be done in a minute. 
In fact, we can capture the whole of $ZF$, but we will be content only with $PA$, which is sufficient for our process.

Clearly $\psi$ is consistent (we are in $ZF$ set theory). 
Now, we can interpret Robinson's arithmetic $Q$ in our theory 
$\psi$, and this way we can prove all those parts of G\"{o}del's
incompleteness theorems (together with the related theorems like Rosser's)
which hold for $Q$. 

However, we want to establish 
stronger incompleteness results which hold for Peano's Arithmetic $PA$, like for example that $PA$ does not prove $Con(PA)$. 
So far what we have is not enough, to render this form of Godel's second incompleteness theorem. 

$PA$ is stronger than $Q$; because it has the induction schema.
So what strikes one as the obvious  thing to do, is to introduce an axiom schema $Ax(ind)$ which
postulates a natural induction principle for the theory of $\psi$. 


We note that N\'emeti defined $\psi$ in a language with only one binary relation, but the operation 
symbols of Peano arithmetic are definable in $Th(\psi)$ (See above). 
In particular, the successor function $succ$ is definable (This analogous to the the interpretability 
of Peano arithmetic in set theory).

Our work in what follows is inspired and is in fact very close to the work of Andreka et all, when they formalized Godel's second, 
in strong enough first order fragments of special 
relativity.

Now the induction schema has the form $ind(\psi ,x)$ is defined as follows. 

\[
\forall x((\psi (0)\wedge \psi (x
)\rightarrow \psi (\text{suc}(x))\Rightarrow
(\forall x)\psi (x)).
\]

Now,

$${\bf Ax(ind)}:=\{ind(\psi ,x): \psi(x) \text { is a formula using $3$ variables }\}.$$

And we define $T^+$ as follows:
\[
T^+:=\psi + {\bf Ax(ind)}
\]

By definition, $T^+$ is an extension of $\psi$ by a finite schema
of axioms, it is consistent and it is valid in the standard models of $\psi$. 


\begin{theorem} There is a formula $Con(T^+)$
using only $3$ variables, such that in each model $\frak{M}\vDash T^+$ 
this formula expresses the consistency of $T^+$. Furthermore,
$$T^+\nvdash  Con(T^+)$$ 
and 
$$T^+\nvdash \neg Con(T^+).$$
\end{theorem}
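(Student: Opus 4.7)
The plan is to transplant the classical Hilbert--Bernays--L\"ob proof of $G2$ into the three variable fragment $L_{3}$, exploiting the pairing functions $p_{0},p_{1}$ of Section 1 and the translation $\tr$ of Theorem \ref{n} to compensate for the loss of extra variables. The starting point is that $T^{+}$ is \emph{strong enough} in the sense of the definition just before the theorem: the formulas $x=0$, $sx=z$, $x+y=z$, $x\cdot y=z$, $x\,\underline{\exp}\,y=z$ that define Robinson's arithmetic inside $\psi$ are all in $L_{3}$, and the schema $\mathbf{Ax(ind)}$ supplies induction for arbitrary three variable formulas. Thus $PA$ is faithfully interpretable in $T^{+}$, which gives us clause (1) of strength.

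The next task is to construct $Prb_{T^{+}}(x)$ as an $L_{3}$ formula satisfying the L\"ob derivability conditions (D2) and (D3). I would first code finite sequences, formulas and proof trees as sets using the quasi projections exactly as in the $\tr$ construction: a sequence $(a_{0},\ldots,a_{k-1})$ is encoded by iterating $p_{0},p_{1}$, and well formedness and proof validity become first order conditions on the resulting sets. Then the standard $L_{\omega}$ provability predicate $\exists y\,p(y,\sigma)$ for $T^{+}$ has a 3-variable translation $\tr(\exists y\,p(y,\sigma))$, and by Theorem \ref{n}(1) this translation preserves both $\models$ and $\vdash_{3}$. Define $Prb_{T^{+}}(x):=\tr(\exists y\,p(y,x))$ and $Con(T^{+}):=\neg Prb_{T^{+}}(\bot)$; both are in $L_{3}$.

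With this predicate in hand, the verification of (D1)--(D3) reduces to internalizing, inside $T^{+}$, three facts about $\tr$ and $\vdash_{3}$: (D1) $\Sigma_{1}$ completeness of the arithmetized proof predicate, (D2) closure of provability under modus ponens, and (D3) provable $\Sigma_{1}$ completeness. Each of these is proved in $PA$ by a routine induction on the length of proofs or on formula complexity; pulling the induction back through the interpretation of $PA$ in $T^{+}$, the inductions required are over translations of $L_{\omega}$ formulas, which by construction of $\tr$ are 3-variable formulas, hence covered by $\mathbf{Ax(ind)}$. At this point the classical G\"odel--L\"ob argument applies verbatim: the diagonal lemma (carried out in $PA$ and transported through $\tr$) produces $G\in L_{3}$ with $T^{+}\vdash G\leftrightarrow \neg Prb_{T^{+}}(G)$, and (D1)--(D3) give $T^{+}\vdash Con(T^{+})\to G$. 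Since Theorem \ref{n} already yields essential incompleteness of $\psi$, and hence of $T^{+}$, we cannot have $T^{+}\vdash G$, so $T^{+}\nvdash Con(T^{+})$. The other half, $T^{+}\nvdash \neg Con(T^{+})$, follows from soundness: $T^{+}$ holds in the standard set theoretic model described after the definition of $\psi$, in which $T^{+}$ is genuinely consistent, so $\neg Con(T^{+})$ is false there and cannot be a theorem.

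The main obstacle I expect is verifying (D2) and (D3) \emph{after} translation, rather than merely before: one must check that the meta theoretic proof of provable $\Sigma_{1}$ completeness survives the passage to $L_{3}$, i.e. that every inductive step in the $PA$ proof of (D3) uses only formulas whose $\tr$ image lies in $L_{3}$ and for which the induction instance is an element of $\mathbf{Ax(ind)}$. This is precisely where the ``implicitly $\omega$ extra dimensions'' phenomenon emphasized before Theorem \ref{rep}, realized concretely by $p_{0},p_{1}$, does the essential work: formulas that seemingly need arbitrarily many variables to recurse over proofs are re-expressed, via pairing, as three variable formulas about encoded sequences, and the induction schema then handles them uniformly. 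Once this bookkeeping is checked, the L\"ob conditions transfer and the theorem follows.
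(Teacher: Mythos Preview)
Your proposal is correct and follows essentially the same strategy as the paper: interpret $PA$ in $T^{+}$, build a provability predicate satisfying the L\"ob conditions, set $Con(T^{+})=\neg\pi(\text{False})$, and invoke the classical $G2$ argument, with soundness in a standard model giving $T^{+}\nvdash\neg Con(T^{+})$.

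There is one small difference of emphasis worth flagging. You construct the three variable provability predicate \emph{externally}, by writing the ordinary $L_{\omega}$ proof predicate and then pushing it through the translation $\tr$ of Theorem~\ref{n}. The paper instead works \emph{internally}: since $PA$ is interpretable in $T^{+}$ and $T^{+}$ is presented by a finite schema (hence its axiom set is arithmetically definable), one directly writes $pr(x,y)$ inside $T^{+}$ as an $L_{3}$ formula via the interpretation of $PA$, without invoking $\tr$ as a separate device. Both routes land on the same predicate up to provable equivalence; your route makes the role of the pairing functions and the three variable bookkeeping more visible, while the paper's route keeps the argument closer to the textbook $PA$ proof and relies on the finite schematic form of $T^{+}$ to get arithmetizability of the axiom set. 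Either way, the essential step---checking that $\mathbf{Ax(ind)}$ supplies the inductions needed for the L\"ob conditions---is the same, and you have identified it correctly.
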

\begin{proof} Firstly,  $PA$ can be interpreted in $T^+$
because the axioms of $T^+$ were
chosen in such a way as to make this true.
The axiom system $T^+$ is given by a finite schema,
completely analogous with the axiom system of $PA$. Therefore, the axiom
theory $T^+$ can also be formalized in $PA.$  
Hence in $T^+$, like $PA$, there is formula $pr(x,y)$ 
expressing that $x$ is the G\"{o}del number of a proof from $T^+$
of a formula $\varphi $ of  whose G\"{o}del number is $y$.
Now, $\exists xpr(x,y)$ is a provability
formula $\pi (y)$ which in $T^+$ expresses that $y$ is the G\"{o}%
del number of an $L_3$ formula provable in $T^+$. Furthermore, one can
easily check that the L\"{o}b conditions (as presented, e.g., in [10.
Def.2.16. p.163]) are satisfied by $\pi (y)$ and by $T^+$. Now, we
choose $Con(T^+)$ to be $\urcorner \pi $($False$).
The rest follows the standard proof.
Also, the generalization for (consistent) extensions of $T^{+}$ with
finitely many new axioms can be proved like the classical case; if we have a $%
\sigma_{1}$ definition of the G\"{o}del numbers of the axioms of $T^+$ 
then we can extend this $\sigma_{1}$-definition to ``$T^+$ an
extra (concrete) axiom, say $\varphi $'', since $\varphi $ has a concrete G\"{o}del number $\lceil \varphi \rceil .$
\end{proof}

Our next thorem says that truth in our theory is independent of $ZF$:

\begin{theorem}  There is a formula $\varphi $
using $3$ variables and an extension $T^{++}$ of $T^+$ in $L_3$ such that truth of statement (i)
below is independent of $ZFC$.

\begin{description}
\item  (i) 
\[
T^{++}\vDash \varphi 
\]
\end{description}

\end{theorem}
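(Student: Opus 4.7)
The plan is to transfer a ZFC-independent arithmetical sentence into $L_3$ via the faithful interpretation of $PA$ in $T^+$ provided by the pairing construction. By Theorem \ref{n} together with the quasi-projections of the previous section, there is an effective translation $\tau$ from the language of Peano arithmetic into $L_3$ such that, for every arithmetical sentence $\sigma$, $T^+\vdash \tau(\sigma)$ if and only if $PA\vdash \sigma$.

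I would set $T^{++}=T^+$ (a trivial extension suffices) and $\varphi=\tau(\neg\mathrm{Con}(ZFC))$, where $\mathrm{Con}(ZFC)$ is the standard arithmetised consistency statement for $ZFC$. By G\"odel's completeness theorem, the assertion ``$T^{++}\models\varphi$'' is equivalent, in the metatheory, to the $\Sigma_1$ arithmetical statement ``$PA\vdash \neg\mathrm{Con}(ZFC)$''. To witness $ZFC$-independence I would exhibit two models of $ZFC$ on which this takes different truth values. In any model of $ZFC+\mathrm{Con}(ZFC)$ the theory $PA+\mathrm{Con}(ZFC)$ is consistent as a subtheory, hence $PA\not\vdash\neg\mathrm{Con}(ZFC)$ and (i) fails. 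In any model of $ZFC+\neg\mathrm{Con}(ZFC)$---consistent by G\"odel's second theorem applied externally to $ZFC$---the $\Sigma_1$ sentence $\neg\mathrm{Con}(ZFC)$ is witnessed by a necessarily non-standard natural number coding a ZFC-contradiction; by the $\Sigma_1$-completeness of $PA$, which is provable in $PA$ and therefore absolute, that model satisfies ``$PA\vdash \neg\mathrm{Con}(ZFC)$'', and (i) holds.

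The main obstacle is arranging the faithfulness of $\tau$ in the harder direction: one needs that $T^+$-provability of a translated sentence implies $PA$-provability of the original, not merely the routine converse. This is precisely what the quasi-projections deliver, via a bi-interpretation between the three-variable fragment and a standard arithmetical base, and it is implicit in the construction preceding Theorem \ref{n}. Once that is in hand, the rest is a metatheoretic application of G\"odel's second theorem to $ZFC$ itself, with the translation then pulling the independence phenomenon back inside $L_3$.
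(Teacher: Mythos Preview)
Your route differs from the paper's. The paper takes $T^{++}$ strong enough to interpret \emph{true arithmetic} $Th(\underline{\omega})$ and sets $\varphi=\tr(\mathrm{Con}(ZF))$; since ``$\underline{\omega}\models\mathrm{Con}(ZF)$'' is independent of $ZFC$ by G\"odel's second theorem, the interpretation transfers this directly to ``$T^{++}\models\varphi$''. That $T^{++}$ is necessarily non-recursive. You instead keep $T^{++}=T^+$ recursive and reduce the question to the provability statement ``$PA\vdash\neg\mathrm{Con}(ZFC)$'', whose independence you argue via $\Sigma_1$-completeness. If it goes through, your version is sharper: it produces a recursive $T^{++}$.

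The gap is the faithfulness step. Theorem~\ref{n} gives a faithful translation $\tr:L_\omega\to L_3$ for theories over one binary relation; it does not speak about the language of arithmetic, and the paper's $T^+$ is assembled by hand (the set-theoretic formula $\psi$ plus the induction schema), not as $\tr(PA)$ under that translation. The paper only claims that $PA$ is \emph{interpreted} in $T^+$, not faithfully so, and nothing rules out $T^+$ proving translated arithmetical sentences that $PA$ does not. Your appeal to Theorem~\ref{n} does not close this.

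You can repair the argument without faithfulness. For the direction ``in $ZFC+\mathrm{Con}(ZFC)$, statement (i) fails'', argue directly: the paper notes that $T^+$ holds in the standard models of $\psi$, and in any such model the interpreted arithmetic is the genuine $\omega$, where $\neg\mathrm{Con}(ZFC)$ is false; hence $T^+\not\models\tau(\neg\mathrm{Con}(ZFC))$. The other direction needs only the soundness of the interpretation together with $\Sigma_1$-completeness of $PA$, exactly as you wrote. With that adjustment your proof is correct and gives a genuinely different, somewhat stronger, conclusion than the paper's.
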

\begin{proof} Choose $T^{++}$ such that 
$Th(\underline{\omega })$ of full first-order arithmetic can be
interpreted  in $T^{++}$.  In $Th(\underline{\omega })$ there exist a formula,
$\psi $, such that the statement "$\underline{\omega }\vDash \psi $''
is independent of $ZFC$ (assuming $ZFC$ is consistent). Such a $\psi $ is the G%
\"{o}delian formula Con($ZF$),  then ``$\tr(\psi)\in  T^{++}$'' or equivalently ``
$T^{++}\vDash \tr(\psi )$'' is a statement about  $T^{++}$ whose
truth is independent from $ZFC.$ 
\end{proof}

\section{Forcing in relation and cylindric algebras}

Tarski used the theory of relation algebras to express Zermelo-Fraenkel set theory as a system of equations without variables. 
Representations of relation algebras will take us back to set-theoretic relational systems. 

On the other hand, Cohen's method of forcing provides us a way to build new models of set theory and to establish the independence 
of many set-theoretic statements. In \cite{z} a way of building the missing link to connect relation algebras and the method of forcing is presented.
Let ${\bf QRA}$ stand for the class of quasi relation algebras.
Maddux proved using a technique which we call a Maddux style representation, that every ${\bf QRA}$ is representable.

Now, see \cite{z} p.55, theorem 13,

\begin{theorem}
\begin{enumarab}
\item Let $\A$ be a simple countable $\QRA$ 
that is based on a model $(M,\in)$ of set theory. 
Let $h$ be a Maddux style representation of $\A$. If $d\in A$ is well founded relation on $M$, then $h(d)$ is well founded
\item Let $\A$ be a simple countable $\CA^{\uparrow}_3$
that is based on a model $(M, \in)$ of set theory. 
Let $h$ the Sagi represenation. If $R\in A$
is well founded then so $h(d)$.
\end{enumarab}
\end{theorem}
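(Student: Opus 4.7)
The plan is to treat part (1) as the main case and then deduce (2) from the categorial equivalence $\QRA \simeq \CA^{\uparrow}_3$ established earlier in the paper: the Sagi representation of an algebra in $\CA^{\uparrow}_3$ is, up to the extra directed cylindrifications, a Maddux representation of the associated $\QRA$ reduct, and the directed cylindrification operations do not alter the underlying binary relation, so the well-foundedness of $h(R)$ transfers from the corresponding statement at the $\QRA$ level.

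For part (1), the first step is to unpack ``Maddux style representation'' under the hypothesis that $\A$ is simple, countable, and based on $(M,\in)$. The quasi-projections $p,q$ of $\A$ are realized concretely as set-theoretic pairing in $M$, and the base set $U$ of $h$ comes equipped with a canonical decoding map $\pi:U\to M$ obtained by iterating the projection coded by $p$. The key structural property of Maddux's construction is the compatibility
$$ (v,u)\in h(d)\ \Longrightarrow\ (\pi(v),\pi(u))\in d^{M}, $$
where $d^{M}\subseteq M\times M$ denotes the interpretation of $d$ as a concrete relation on $M$.

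With this in hand I would argue by contradiction. Suppose $h(d)$ is not well-founded, so that there exist $u_0,u_1,u_2,\dots\in U$ with $(u_{n+1},u_n)\in h(d)$ for every $n$. Setting $m_n=\pi(u_n)$, the compatibility displayed above yields $(m_{n+1},m_n)\in d^{M}$ for every $n$, i.e.\ an infinite descending chain in $d^{M}$. This directly contradicts the hypothesis that $d$ is a well-founded relation on $M$, since well-foundedness of a set-sized concrete relation is absolute: an infinite descending $\omega$-sequence in the ambient universe is already a witness against well-foundedness on $M$.

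The main obstacle is verifying the compatibility $\pi\circ h(d)\subseteq d^{M}\circ\pi$, i.e.\ that Maddux's construction is faithful to the pairing structure of $M$. This is where the quasi-projection axioms $\breve p;p\leq 1'$, $\breve q;q\leq 1'$, and $\breve p;q=1$ do their work: together with extensionality in $M$ they force $\pi$ to be a well-defined function and ensure that the abstract pairing coded inside $\A$ agrees with genuine set-theoretic pairing in $M$, so that composition with $\pi$ commutes with the $\RA$-operations defining $h(d)$. Once this compatibility is granted, the preservation of well-foundedness is essentially automatic, and part (2) drops out via the functorial passage through the $\QRA$ reduct described above.
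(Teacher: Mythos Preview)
The paper does not actually prove this theorem: it merely points to ``\cite{z} p.55, theorem 13'' (and the bibliography entry for \cite{z} is itself left empty), so there is no proof in the paper against which to compare your proposal.

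On the substance of your argument: the overall strategy---pull an infinite descending $h(d)$-chain in $U$ back to an infinite descending $d^{M}$-chain in $M$ via a decoding map $\pi:U\to M$---is the natural one, and your reduction of part (2) to part (1) via the $\QRA$/$\CA^{\uparrow}_3$ equivalence is reasonable. However, the crux of the matter is precisely the compatibility $(v,u)\in h(d)\Rightarrow(\pi(v),\pi(u))\in d^{M}$, and you have asserted this as ``the key structural property of Maddux's construction'' without verifying it. In Maddux's step-by-step representation the base $U$ is not literally $M$, and the relationship between elements of $U$ and elements of $M$ is more delicate than ``iterate the projection coded by $p$''; establishing that such a $\pi$ exists and has the stated compatibility is exactly the content of the theorem, not a preliminary. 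You have correctly located the obstacle but not removed it.

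There is also a subtlety in the final step: if ``$d$ is a well-founded relation on $M$'' is read internally (i.e.\ $(M,\in)\models$``$d$ is well-founded''), then an external $\omega$-descending sequence $(m_n)$ is not an immediate contradiction, since $M$ need not contain that sequence as an element. The subsequent use of the Mostowski collapse in the paper suggests the intended reading is external well-foundedness, but you should make this explicit.
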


So Maddux's and Sagi's style representations, in fact preverses well foundeness of relations, which is not an elementary fact. 
In Theorem 14,  p. 61 of \cite{z}, a characteriszation of simple ${\bf QRA}$'s  with a distinguished element 
that are  isomorphic to an algebra of relations arising from a countable transitive model of enough set theory is given.

So let $h$ be the Maddux style representation of such an $\A$,  on a set algebra with base $U$. Then $U$ is countable, and $h(e)$ "set like".
By Mostowski Collapsing theorem, there is a transitive $M$ and a one to one 
map $g$ from $U$ onto $M$, such that $g$ 
is an isomorphism betwen $(U, h(e))$ and $(M,\in)$, where
$\in$ is the real membership.
$(M,\in)$ is also, a model of enough set theory. Let $M[G]$ is generic extension of $M$, formed by the methods of forcing,
and take the ${\bf QRA}$, call it $\A[G]$
corresponding to  $(M[G],\in).$  Assume for example that $\A$ models the translation of the continuum hypothesis, 
while $M[G]$ models its negation. 
Then we can conclude that $\A$ and $\A[G]$ are simple countable relation algebras that are equationaly distinct.
similary for the corresponding directed $\CA$s.

One can carry similar investigations in the context of directed cylindric algebras instead of ${\bf QRA}$, 
by noting that representations of such algebras defined by 
Sagi also preserves well foundness.

\end{document}